\def\frk{\frak}               
\def\mm{{\frk m}}
\def\Phi{{\frk n}}
\def\Phi{{\frk N}}
\def\opn#1#2{\def#1{\operatorname{#2}}} 
\opn\projdim{proj\,dim} \opn\injdim{inj\,dim} \opn\rank{rank}
\opn\depth{depth} \opn\sdepth{sdepth} \opn\fdepth{fdepth}
\opn\grade{grade} \opn\height{height} \opn\embdim{emb\,dim}
\opn\codim{codim}  \opn\min{min} \opn\max{max}
\opn\Tr{Tr} \opn\bigrank{big\,rank}
\opn\superheight{superheight}\opn\lcm{lcm}
\opn\trdeg{tr\,deg}
\opn\reg{reg} \opn\lreg{lreg} \opn\ini{in} \opn\lpd{lpd}
\opn\size{size}
\opn\div{div} \opn\Div{Div} \opn\cl{cl} \opn\Cl{Cl}
\opn\Spec{Spec} \opn\Supp{Supp} \opn\supp{supp} \opn\Sing{Sing}
\opn\Ass{Ass} \opn\Min{Min}
\opn\Ann{Ann} \opn\Rad{Rad} \opn\Soc{Soc}
\opn\Im{Im} \opn\Ker{Ker} \opn\Coker{Coker} \opn\Am{Am}
\opn\Hom{Hom} \opn\Tor{Tor} \opn\Ext{Ext} \opn\End{End}
\opn\Aut{Aut} \opn\id{id}  \opn\deg{deg}
\opn\nat{nat}
\opn\pff{pf}
\opn\Pf{Pf} \opn\GL{GL} \opn\SL{SL} \opn\mod{mod} \opn\ord{ord}
\opn\Gin{Gin} \opn\Hilb{Hilb}
\opn\aff{aff} \opn\con{conv} \opn\relint{relint} \opn\st{st}
\opn\lk{lk} \opn\cn{cn} \opn\core{core} \opn\vol{vol}
\opn\link{link} \opn\star{star}
\opn\gr{gr}
\def\pot#1#2{#1[\kern-0.28ex[#2]\kern-0.28ex]}
\opn\dirlim{\underrightarrow{\lim}}
\opn\inivlim{\underleftarrow{\lim}}
\let\to=\rightarrow
\def\Implies{\ifmmode\Longrightarrow \else
        \unskip${}\Longrightarrow{}$\ignorespaces\fi}
\def\implies{\ifmmode\Rightarrow \else
        \unskip${}\Rightarrow{}$\ignorespaces\fi}
\def\iff{\ifmmode\Longleftrightarrow \else
        \unskip${}\Longleftrightarrow{}$\ignorespaces\fi}
\newtheorem{Theorem}{Theorem}[]
\newtheorem{Lemma}[Theorem]{Lemma}
\newtheorem{Corollary}[Theorem]{Corollary}
\newtheorem{Proposition}[Theorem]{Proposition}
\theoremstyle{definition}
\newtheorem{Example}[Theorem]{Example}
\newtheoremstyle{subsection-tweak}
   {11pt}
   {3pt}%
   {}
   {}%
   {\bfseries}
   {}%
   {.5em}
   {\thmnumber{\@{#1}{}\@{#2}.}%
    \thmnote{~{\bfseries#3.}}}    
\newcounter{numberingbase}
\theoremstyle{subsection-tweak}
\newtheorem{bpp}[Theorem]{}
\newtheorem{bppt}[numberingbase]{}
\newcommand{\bbpp}{\begin{bpp}}
\newcommand{\eepp}{\end{bpp}}
\newcommand{\bbppt}{\begin{bppt}}
\newcommand{\eeppt}{\end{bppt}}
\theoremstyle{theorem}
\theoremstyle{definition}
\newcommand{\val}{\mathrm{val}}		
\let\epsilon\varepsilon
\let\phi=\varphi
\def\qed{\ifhmode\textqed\fi
      \ifmmode\ifinner\quad\qedsymbol\else\dispqed\fi\fi}
\def\textqed{\unskip\nobreak\penalty50
       \hskip2em\hbox{}\nobreak\hfil\qedsymbol
       \parfillskip=0pt \finalhyphendemerits=0}
\def\dispqed{\rlap{\qquad\qedsymbol}}
\opn\dis{dis}
\def\pnt{{\raise0.5mm\hbox{\large\bf.}}}
\opn\Lex{Lex}
\begin{document}

\title{Valuation rings of dimension one  as limits of smooth algebras}

\author{ Dorin Popescu}

\address{Simion Stoilow Institute of Mathematics of the Romanian Academy, Research unit 5, University of Bucharest, P.O. Box 1-764, Bucharest 014700, Romania, Email: {\sf dorin.popescu@imar.ro}}

\begin{abstract} As in Zariski's Uniformization Theorem we show that a valuation ring $V$  of characteristic $p>0$  of dimension one is a filtered direct limit of   smooth  ${\bf F}_p$-algebras under some conditions of transcendence degree. Under mild conditions, the algebraic immediate extensions of valuation rings are dense if they are filtered direct limit of smooth morphisms. 

{\it Key words } : Immediate extensions of valuations rings, Pseudo convergent sequences, Pseudo limits, smooth morphisms, Henselian rings.   \\
 {\it 2010 Mathematics Subject Classification: Primary 13F30, Secondary 13A18,13L05,13B40.}
\end{abstract}

\maketitle

\section*{Introduction}
Zariski predicted, and proved in characteristic $0$ in \cite{Z}, that any integral algebraic variety X
equipped with a dominant morphism $v : Spec(V) \to X$ from a valuation ring $V$ can be desingularized
along $V$ : there should exist a proper birational map ${\tilde X} \to  X$ for which the lift ${\tilde v} : Spec(V) \to {\tilde X}$
of $v$ supplied by the valuative criterion of properness would factor through the regular locus of $\tilde X$. 
This local form of resolution of singularities remains open in positive and mixed characteristic, and
implies that every valuation ring V should be a filtered direct limit of regular rings.  A filtered direct limit (in other words a filtered colimit) is a limit indexed by a small category that is filtered (see \cite[002V]{SP} or \cite[04AX]{SP}). A filtered  union is a filtered direct limit in which all objects are subobjects of the final colimit, so that in particular all the transition arrows are monomorphisms. There exists several nice extensions of Zariski's Uniformization Theorem as for example recently the result of   B. Antieau, R. Datta \cite[Theorem 4.1.1]{AD}, which says that every perfect valuation ring of characteristic $p>0$ is a filtered union of its smooth ${\bf F}_p$-subalgebras. This result is an application of \cite[Theorem 1.2.5]{T} which relies on some results from \cite{J}. Also E. Elmanto and M. Hoyois proved that an absolute integrally closed valuation ring of residue field of characteristic $p>0$ is a filtered union of its regular finitely generated  ${\bf Z}$-subalgebras (see \cite[Corollary 4.2.4]{AD}).

The goal
of this paper is to establish
a kind of Zariski's Uniformization Theorem in characteristic $p>0$ and dimension one.

\begin{Theorem}\label{T} Let $V$ be a one dimensional valuation ring containing a perfect field $F$ of characteristic $p>0$, $k$ its residue field,   $\Gamma$ its  value group and $K$ its fraction field.  Then the following statements hold
\begin{enumerate}

\item  if $k\subset V$ and $K=k(x)$ for some   system of algebraically independent  elements  $x=(x_1,\ldots,x_r)\in V^r$ over $k$ such that   $\Gamma=\oplus_{i=1}^{r-1} {\bf Z}\val(x_i)$
then
 $V$ is a filtered
 union of  its polynomial $k$-subalgebras, in particular of its smooth $F$-subalgebras,

\item if $\Gamma$ is free of rank $r$,  $k\subset V$ and  $x=(x_1,\ldots,x_r)\in V^r$ is a system of elements such that  $\Gamma=\oplus_{i=1}^r {\bf Z}\val(x_i)$ and $K/k(x)$ is algebraic  
  then $V$ is  a filtered
union of its smooth $F$-subalgebras,  in particular of its smooth ${\bf F}_p$-subalgebras,

\item if  $\Gamma$ is free of rank $r$, $k/F$ is a field extension of finite type and $\trdeg_FK=\trdeg_Fk+r$, 
then $V$ is a filtered union of its smooth $F$-subalgebras, in particular of its smooth ${\bf F}_p$-subalgebras.
\end{enumerate}
\end{Theorem}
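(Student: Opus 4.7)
The overall strategy is to produce, in each part, a directed family of finitely generated subrings $\{V_\alpha\}\subset V$ whose union exhausts $V$, each member polynomial over $k$ or smooth over $F$ as required. The main technical device throughout is the theory of pseudo-convergent sequences: an element of $V$ is approximated by a sequence of elements of the $V_\alpha$'s whose successive differences have strictly increasing valuation, and the pseudo-limit is the element one wishes to capture.

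For part (1), the initial polynomial subring is $A_0=k[x_1,\dots,x_r]\subset V$. Because $\Gamma$ has rational rank only $r-1$, there is a nontrivial integer relation $m\val(x_r)=\sum_{i<r}n_i\val(x_i)$, producing a unit $u=x_r^m\prod_{i<r}x_i^{-n_i}\in V^\times$ whose residue $\bar u\in k$ is well-defined. Subtracting off $\bar u$ and iterating replaces $A_0$ by successive polynomial $k$-subalgebras of $V$ encoding more and more of the pseudo-expansion of $x_r$ in monomials of the other $x_i$. The plan is then to verify that every $v\in V$ lies in some enlargement of this type, possibly after analogous substitutions driven by $v$'s own pseudo-expansion, giving the desired filtered union of polynomial $k$-subalgebras; each such subalgebra is automatically smooth over the perfect subfield $F\subset k$.

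For part (2), the assumption that $\Gamma$ is free of rank $r$ makes the restriction $V\cap k(x)$ an Abhyankar valuation ring of $k(x)$ with residue field $k$, and this restricted case is handled directly by a monomial construction that is a simpler variant of the method of part (1). Since $K/k(x)$ is algebraic and $F\subset k$ is perfect, each element of $V$ is then obtained by adjoining roots of separable polynomials to the $F$-subalgebras approximating $V\cap k(x)$; an \'etale-neighborhood argument, or equivalently General N\'eron Desingularization specialized to dimension one, promotes each finitely generated approximation to a smooth $F$-subalgebra of $V$.

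Part (3) is the most general case and, to my mind, the main obstacle: $k$ is no longer a subring of $V$, and $k/F$ may be a nontrivial finitely generated extension. The plan is to reduce to part (2) by lifting $k$ into $V$: choose a separating transcendence basis $\{t_1,\dots,t_s\}$ of $k/F$ (possible since $F$ is perfect), lift each $t_j$ to a unit $\tilde t_j\in V$, and adjoin $F(\tilde t_1,\dots,\tilde t_s)\subset V$ to $F$. Together with $x_1,\dots,x_r\in V$ realizing a $\mathbb Z$-basis of $\Gamma$, the enlarged subring should fall under the hypotheses of part (2). The delicate point is controlling the compatibility of these lifts with both the algebraic structure of $k/F$ and the valuation on $V$, and in particular preserving separability at each step despite the residue field extension; this is where Kaplansky's theory of immediate extensions (pseudo-limits) and General N\'eron Desingularization deliver the needed smoothing of the approximating $F$-subalgebras, and identify the resulting filtered colimit with $V$ itself.
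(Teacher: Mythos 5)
Your outline misses the central difficulty the paper is built around: the defect in characteristic $p$. In part (1) the real content is not the monomial/residue-subtraction iteration (which is the classical, essentially formal part, handled in the paper by reducing to $W=V\cap k(x_1,\ldots,x_{r-1})$ being a filtered union of localizations of polynomial $k$-subalgebras), but the step you defer as ``the plan is then to verify that every $v\in V$ lies in some enlargement of this type.'' That verification amounts to showing that the pseudo convergent sequence over $W$ whose pseudo limit is $x_r$ (or any element of $V$) is of \emph{transcendental} type; if it is of algebraic type the conclusion is actually false --- $V$ is then \emph{not} a filtered union of localizations of polynomial subalgebras (Theorem \ref{ky} and its Corollary), and algebraic-type sequences without pseudo limits do occur in characteristic $p$ precisely because of Ostrowski's defect (see the example following Lemma \ref{l0}). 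The paper excludes this by proving $W$ is (separably) defectless via Kuhlmann's Generalized Stability Theorem (Corollary \ref{ku}), then deducing density of separable algebraic immediate extensions (Lemma \ref{l0}) and transcendence of the relevant sequences (Lemma \ref{l1}). Your proposal never confronts this, so the iteration has no reason to capture all of $V$. The same omission undermines your part (2): the \'etale-neighborhood/N\'eron desingularization step needs the immediate algebraic extension $W=V\cap k(x)\subset V$ to be \emph{dense} (that is what provides the factorization modulo $z^3$ required by Lemma \ref{k'}, or the hypothesis of the N\'eron--Schappacher theorem), and density is again exactly defectlessness of $W$, false for general immediate algebraic extensions in characteristic $p$. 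Also, your assertion that every element of $V$ is obtained ``by adjoining roots of separable polynomials'' is unjustified: $K/k(x)$ is only assumed algebraic, possibly inseparable; the paper uses perfectness of $F$ only to make $K/F$ separable, so that finitely generated $F$-subalgebras have nonzero $H_{E/F}$ and a standard element $z$ is available for Lemma \ref{k'}.

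In part (3) there is a second concrete gap. Lifting a separating transcendence basis $t_1,\ldots,t_s$ of $k/F$ into $V$ only produces a subfield mapping onto $F(t_1,\ldots,t_s)$, whereas to invoke part (2) you need the entire residue field $k$ inside $V$; $k$ is a finite separable extension of $F(t)$, and lifting that finite separable part requires Hensel's lemma, i.e.\ $V$ Henselian, which a one-dimensional valuation ring need not be. The paper's route is to first replace $V$ by its completion --- a reduction that itself has to be justified, and is justified by Lemma \ref{k'}, which lets a factorization through a smooth $\widehat V$-algebra be descended to a factorization through a smooth $F$-algebra --- and only then lift $k$ into the Henselian ring before intersecting with $k(x)$ and proceeding as in part (2). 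Your appeal to ``Kaplansky's theory and General N\'eron Desingularization'' at this point does not supply either the completion/descent step or the coefficient field, so the reduction to part (2) as you describe it does not go through.
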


The proof is given in Corollaries \ref{C}, \ref{co1},   \ref{co2}.  The idea of the proof of Theorem \ref{T} (1)   
is to see that $W=V\cap k(x)$ is a filtered  union of its localizations of polynomial $k$-subalgebras and then to reduce to show that  the immediate extension $W\subset V$ is  a filtered
union of its smooth $W$-subalgebras. This is done somehow in \cite[Proposition 18]{P} when  $k\supset {\bf Q}$. If char $k>0$ this does not work  (see e.g. \cite[Remark 6.10]{Po1}), the reason being that in general a valuation ring could have a so called defect as was noted by Ostrowski. The  
 Generalized Stability Theorem of F.\ V.\ Kuhlmann \cite[Theorem 1.1]{K1} says in particular that $W$ is defectless (see Corollary \ref{ku}) and so we may show (1) using Lemma \ref{l1}. For  Theorem \ref{T} (2) we see that $W\subset V$ is dense and we use a N\'eron-Schappacher Theorem (see e.g. \cite[Theorem 4.1]{Po1}) or
 a kind of General N\'eron Desingularization (see Lemma \ref{k'}) which allow us to handle dense extensions.

 When $V$ contains $\bf Q$ then an immediate algebraic extension of valuation rings $V\subset V'$  is dense by Ostrowski's Defektsatz \cite[Sect. 9, No 57]{O}. It follows that 
   $V\subset V'$ is  a filtered
direct limit of   smooth $V$-algebras (see e.g. \cite[Proposition 9]{P}). The converse is also mainly true as shows the following result (see Theorem \ref{t}).

\begin{Theorem}\label{T1}  Let  $ V'$ be an  immediate algebraic  extension  of a valuation ring $V$, 
$\hat V$ the completion of $V$ and $K$, ${\hat K}$ the fraction fields of $V$, ${\hat V}$. Assume that   $\trdeg_K{\hat K}\geq 1$,  ${\hat K}/K$ is separable,  $V'$ is a filtered direct limit of smooth $V$-algebras and either $V$ is Henselian, or $\dim V=1$. Then $V\subset V'$ is dense.
\end{Theorem}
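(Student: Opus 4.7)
The strategy is to prove density by showing that every $a \in V'$ lies in the completion $\hat V$, viewed inside $\hat{V'}$. Since $\hat V$ is complete, hence closed, in $\hat{V'}$ and $V$ is dense in $\hat V$, this will yield the required approximation of $a$ by elements of $V$.

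First, reduce to the case $V$ Henselian. If $\dim V = 1$ and $V$ is not Henselian, replace $V$ by its Henselization $V^h \subset \hat V = \widehat{V^h}$. The hypotheses---immediacy, algebraicity, being a filtered colimit of smooth algebras, and the transcendence and separability of $\hat K/K$---all transfer to $V^h \subset V^h V'$ (by base change for the smoothness and by construction for the others), and density of the Henselized extension combined with density of $V$ in $V^h$ gives density of $V$ in $V'$.

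Assume now $V$ is Henselian, and fix $a \in V'$. By the filtered-colimit hypothesis there are a smooth $V$-algebra $B$, a $V$-algebra homomorphism $\phi \colon B \to V'$, and $a' \in B$ with $\phi(a') = a$. Present $B$ in standard smooth form near $a'$ as $V[T_1,\dots,T_d,Y_1,\dots,Y_m]/(f_1,\dots,f_m)$ with invertible Jacobian $\Delta = \det(\partial f_i/\partial Y_j)$. The idea is to construct a companion map $\hat\phi \colon B \to \hat V$ such that $\hat\phi(a')$ approximates $a$ arbitrarily well in $\hat{V'}$. To do so, use immediacy of $V \subset V'$ to pick $v_i \in V$ with $v_i \equiv \phi(T_i) \pmod{\mm'}$, and then invoke the hypotheses $\trdeg_K \hat K \geq 1$ and separability of $\hat K/K$ to perturb the $v_i$ by elements of $\mm \hat V$ into parameters $s_i \in \hat V$ in sufficiently general position. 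Hensel's Lemma in the Henselian ring $\hat V$, applied to the equations $f_k(s,Y) = 0$ with invertible Jacobian $\Delta$, uniquely solves for $\hat y_j \in \hat V$, giving the desired $V$-algebra map $\hat\phi \colon B \to \hat V$.

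The main obstacle is then to show that $\hat\phi(a')$ really approximates $a = \phi(a')$ inside $\hat{V'}$, and to arbitrary precision. The plan here is a General-N\'eron-Desingularization style argument: the invertibility of $\Delta$ together with separability of $\hat K/K$ should yield a quantitative bound roughly of the form $\val(\hat\phi(a') - a) \geq c \cdot \min_i \val(s_i - \phi(T_i))$ computed in $\hat{V'}$, so that refining the choice of the $s_i$'s forces $a$ into the closed subring $\hat V \subset \hat{V'}$. Once this is established, density of $V$ in $\hat V$ supplies a $b \in V$ with $\val(a - b)$ as large as desired, completing the proof. The delicate point is the quantitative control at the last step, where the interplay between smoothness of $B$, Henselian-ness of $\hat V$, separability of $\hat K/K$, and the transcendence hypothesis all come into play simultaneously.
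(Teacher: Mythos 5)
There is a genuine gap, and it sits exactly at the step you yourself flag as ``the main obstacle'': the argument is circular there. Your plan needs parameters $s_i\in\hat V$ with $\val(s_i-\phi(T_i))$ arbitrarily large, so that the bound $\val(\hat\phi(a')-a)\geq c\cdot\min_i\val(s_i-\phi(T_i))$ pushes $a$ into $\hat V$. But being able to approximate the elements $\phi(T_i)\in V'$ by elements of $V$ (or $\hat V$) to arbitrary precision is precisely the density you are trying to prove. Immediacy only gives residue-level (first-order) approximation; when density fails there is an element $x'\in V'$ and a $\gamma\in\Gamma$ with $\val(x'-v)<\gamma$ for \emph{every} $v\in\hat V$, so ``refining the choice of the $s_i$'' is impossible in exactly the problematic case, and the proposed quantitative bound (whose meaning with a ``constant $c$'' is anyway unclear in a general value group) gives nothing. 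Note also that an immediate algebraic extension of valuation rings need not be dense (see the Example built on Ostrowski's defect in Section~1), so no purely formal Hensel/Newton argument from smoothness can succeed: some genuine valuation-theoretic obstruction must enter, and in your sketch the hypotheses $\trdeg_K\hat K\geq 1$ and separability of $\hat K/K$ are only invoked vaguely (``sufficiently general position''), never doing real work.

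The paper's proof runs along entirely different lines, by contradiction via pseudo convergent sequences. If $x\in V'$ is not in $\hat V$, it is a pseudo limit of an algebraic pseudo convergent sequence $v$ with no pseudo limit in $V$ (Kaplansky); the hypothesis $\trdeg_K\hat K\geq 1$ supplies a transcendental $t\in\hat V$, scaled so that $x+t$ is transcendental over $K$ yet still a pseudo limit of the same $v$. The filtered-colimit hypothesis is then used not to approximate $x$ but to realize valuations: writing $f(x+t)=dz$ with $z$ a unit and factoring the solution of $F_1=f-dZ$, $F_2=ZZ'-1$ through a smooth algebra, a retraction (Henselian case, Lemma~\ref{r2}) or passage to the completion using separability of $\hat K/K$ (case $\dim V=1$, Lemma~\ref{r3}) produces $y\in V$ with $\val(f(y))=\val(f(x+t))$ for every polynomial $f$. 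Theorem~\ref{ky}, which rests on the Kuhlmann--\'Cmiel theorem about the minimal (key) polynomial of an algebraic pseudo convergent sequence, then forces $v$ to be transcendental, a contradiction. These ingredients---the pseudo convergent sequence attached to a non-approximable element, the trick of adding $t$, and the Kuhlmann--\'Cmiel obstruction---are absent from your proposal, and without them the plan cannot be completed.
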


We owe thanks to F.\ V.\ Kuhlmann   who hinted us some mistakes in some preliminary forms of the paper. Also we  owe thanks to K\k{e}stutis \v{C}esnavi\v{c}ius  who hinted us some mistakes in the preliminary forms of Theorem \ref{ky}. 
\vskip 0.5 cm

\section{The Defect}
Let $V$ be a  valuation ring with value group $\Gamma$,  $K$ its fraction field and its valuation $\val:K^*=K\setminus \{0\}\to \Gamma$. Let $F$ be a finite field extension of $K$, $v_i$, $1\leq i\leq r$ the valuations of $F$ extending $\val$ and $V_i$ be the valuation rings of $F$ defined by $v_i$. Let $e_i, f_i$ be the ramification index respectively the degree of the residue field extension of $V_i$ over $V$. It is well known that 
$$[F:K]\geq \sum_{i=1}^r e_if_i.$$
 If this inequality is equality for all finite (resp. finite separable) field  extensions $F$ of $V$ we say that $V$ is {\em defectless} (resp. {\em separably defectless}, see \cite{K} for  details and examples).     

We remind the following  important result for the existence of defectless valuation rings. 

\begin{Theorem} (\cite[Theorem 2,  VI, (8.5)]{Bou})\label{b} Let $F$ be a finite  field extension of $K$ and $B$ be the integral closure of $V$ in $F$. Then the above inequality is equality if and only if $B$ is a free $V$-module.  
\end{Theorem}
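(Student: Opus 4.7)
The plan is to prove both directions of the equivalence
$$[F:K] = \sum_{i=1}^r e_i f_i \iff B \text{ is free as a } V\text{-module}.$$
Set $n = [F:K]$ and $k = V/\mm_V$. The starting point is the standard structure of $B$: it is a semilocal domain whose maximal ideals are $\mathfrak{n}_i := \mathfrak{m}_{V_i} \cap B$, the localisations $B_{\mathfrak{n}_i}$ are the $V_i$, and $B \otimes_V K = F$ (every element of $F$ admits a common denominator in $V$).

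For the direction \emph{$B$ free $\Rightarrow$ equality}, the rank of $B$ is forced to be $n$ by $B \otimes_V K = F$, so $\dim_k B/\mm_V B = n$. Since $B$ is now finitely generated over $V$, the quotient $B/\mm_V B$ is a finite-dimensional semilocal $k$-algebra, and the Chinese Remainder Theorem applied to its maximal ideals $\mathfrak{n}_i/\mm_V B$ gives $B/\mm_V B \cong \prod_i V_i/\mm_V V_i$. Each factor $V_i/\mm_V V_i$ has $k$-dimension $e_i f_i$, either by a length computation through the $e_i$ powers of $\mathfrak{m}_{V_i}$ (discrete case) or by the analogous filtration by value-group intervals in general; summing yields $n = \sum_i e_i f_i$.

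For the converse, I would first establish the general inequality $\dim_k B/\mm_V B \leq \sum_i e_i f_i$ by applying the same CRT decomposition to any finitely generated $V$-submodule of $B$, and then promote it to equality under the hypothesis. Pick $b_1, \dots, b_n \in B$ whose images form a $k$-basis of $B/\mm_V B$; $K$-linear independence follows by the standard trick of dividing any alleged dependence $\sum a_j b_j = 0$ by the coefficient of smallest valuation and reducing modulo $\mm_V$ to a nontrivial $k$-linear relation. Thus $\bigoplus_{j=1}^n V b_j \hookrightarrow B$ is a free $V$-submodule of full rank $n$.

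The main obstacle is proving the reverse inclusion $B \subseteq \bigoplus_j V b_j$. My plan is an approximation-and-telescoping argument: given $b \in B$, write $b = \sum a_j^{(0)} b_j + \pi_1 b^{(1)}$ with $a_j^{(0)} \in V$, $\pi_1 \in \mm_V$, $b^{(1)} \in B$, and iterate on $b^{(1)}$. Termination of this procedure is precisely what the numerical equality (``defectlessness'') provides; equivalently, setting $M := B/\bigoplus_j V b_j$, one has $M = \mm_V M$ and $M$ is torsion-free as a $V$-module, which when combined with the semilocal structure of $B$ and the fact that the CRT estimate $\dim_k B/\mm_V B \leq \sum e_i f_i$ is \emph{sharp} forces $M = 0$. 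This is the delicate step, because $V$ need not be Noetherian and one cannot simply invoke Nakayama; by contrast the other direction and all the preceding steps are essentially formal bookkeeping.
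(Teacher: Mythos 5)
The paper offers no argument for this statement at all (it is quoted from Bourbaki), so your attempt has to stand on its own, and it has two genuine gaps. In the direction ``$B$ free $\Rightarrow$ equality'' the numerical claim $\dim_k V_i/\mathfrak{m}_V V_i=e_if_i$ is false for a general valuation ring: the ``filtration by value-group intervals'' would need coset representatives of $\Gamma_i/\Gamma$ with values below every positive element of $\Gamma$, and these need not exist. Whenever the positive part of $\Gamma$ is coinitial in that of $\Gamma_i$ (e.g.\ $\Gamma$ archimedean and dense, or typical lexicographic higher-rank situations) one gets $\mathfrak{m}_V V_i=\mathfrak{m}_{V_i}$ and $\dim_k V_i/\mathfrak{m}_V V_i=f_i$, no matter what $e_i$ is. What survives in general is only the inequality $\dim_k V_i/\mathfrak{m}_V V_i\le e_if_i$ (each coset of $\Gamma$ in $\Gamma_i$ contributes at most one value below the positive part of $\Gamma$, and each such value at most $f_i$ dimensions), and with it this direction can be repaired: $n=\dim_k B/\mathfrak{m}_V B\le\sum_i e_if_i\le n$. (Also, plain CRT on the maximal ideals $\mathfrak{n}_i/\mathfrak{m}_V B$ only yields $\prod_i k_i$; to get the factors $V_i/\mathfrak{m}_V V_i$ you need that $B/\mathfrak{m}_V B$ is zero-dimensional semilocal, idempotent lifting modulo the nilradical, and $B_{\mathfrak{n}_i}=V_i$.)

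The converse, which is the only substantial direction, does not go through as planned. Your $M=B/\bigoplus_j Vb_j$ is a torsion module (every $b\in B$ satisfies $ab\in\bigoplus_j Vb_j$ for some nonzero $a\in V$), not torsion-free, so the proposed ``$M=\mathfrak{m}_V M$ plus torsion-freeness forces $M=0$'' mechanism is incoherent, and no termination criterion for the telescoping is ever supplied --- Nakayama-type arguments are exactly what is unavailable here. More decisively, no proof that starts by lifting a $k$-basis of $B/\mathfrak{m}_V B$ can work at this level of generality, because that dimension can be strictly smaller than $n$ even when $\sum_i e_if_i=n$: take $K=\bigcup_m k_0(t^{1/2^m})$ with the $t$-adic valuation (value group ${\bf Z}[1/2]$, residue field $k_0$ of characteristic $0$) and $F=K(t^{1/3})$; there is a unique extension with $e=3$, $f=1$, so equality holds with $n=3$, yet $B$ is the valuation ring of $F$, $\mathfrak{m}_V B=\mathfrak{m}_B$, and $\dim_k B/\mathfrak{m}_V B=1$, so $B$ is not even a finitely generated $V$-module. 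This shows the delicate implication genuinely depends on hypotheses (such as discreteness of the value group, under which $\mathfrak{m}_V V_i=\mathfrak{m}_{V_i}^{e_i}$ and your length count is correct) that appear in Bourbaki's precise formulation but are never used in your argument; as written, the plan cannot be completed, and you should consult and quote the exact hypotheses of the cited theorem rather than prove it in this generality.
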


\begin{Lemma}\label{s}  Let $V$ be a  valuation ring  and $S$ a multiplicative closed system from $V$. If $V$ is defectless (resp. separably defectless) then the valuation ring $S^{-1}V$ is defectless (resp. separably defectless) too.
\end{Lemma}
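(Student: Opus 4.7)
The plan is to reduce the problem to the freeness criterion of Theorem \ref{b} and then exploit the fact that freeness, localization, and integral closure all cooperate nicely. First I would note that $W:=S^{-1}V$ is again a valuation ring with the same fraction field $K$ as $V$ (it is the localization $V_{\pp}$ for the prime $\pp=V\setminus S^{\mathrm{sat}}$), so it makes sense to compare $[F:K]$ with $\sum e_if_i$ for extensions to $W$.

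Next I would pick an arbitrary finite (resp.\ finite separable) field extension $F/K$ and let $B$ denote the integral closure of $V$ in $F$. By hypothesis and Theorem \ref{b}, $B$ is a free $V$-module of rank $[F:K]$. I then want to identify the integral closure of $W$ in $F$ with $S^{-1}B$; this is a standard fact, since integral closure commutes with localization at a multiplicative subset of the base ring, and the formation of $S^{-1}B$ in $F$ is well defined because $B\subset F$. Having made this identification, $S^{-1}B$ inherits a free $W$-basis of cardinality $[F:K]$ from any $V$-basis of $B$, so the integral closure of $W$ in $F$ is a free $W$-module of rank $[F:K]$.

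Finally, I would invoke the converse direction of Theorem \ref{b} applied to the extension $W\subset F$: freeness of the integral closure forces the fundamental equality $[F:K]=\sum e_if_i$ for the extensions of $W$ to $F$. Since $F/K$ was an arbitrary finite (resp.\ finite separable) extension, $W$ is defectless (resp.\ separably defectless), as desired.

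I do not expect any serious obstacle here; the only point requiring a brief justification is that localization of a valuation ring is a valuation ring with the same fraction field, and that integral closure commutes with such localization, both of which are standard. Theorem \ref{b} does essentially all the work by converting the numerical defect condition into the module-theoretic statement that behaves transparently under localization.
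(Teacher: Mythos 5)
Your proposal is correct and follows essentially the same route as the paper: both reduce to the freeness criterion of Theorem \ref{b} and identify the integral closure of $S^{-1}V$ in $F$ with $S^{-1}B$; the paper merely spells out the standard commutation of integral closure with localization by clearing the denominator $s$ from an integral equation, which you cite as a known fact.
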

\begin{proof} Assume $V$ is defectless (the proof in the separably defectless case is similar). Let $F$ be a finite field extension of the fraction field $K$ of $V$ (and of  $S^{-1}V$ too) and $B$, $B'$ be the integral closures of $V$, respectively  $S^{-1}V$ in $F$.
By Theorem \ref{b} we have to show that $B'$ is a free  $S^{-1}V$-module. As $V$ is defectless by the quoted theorem we see that $B$ is a free $V$-module and so  $S^{-1}B$ is a free  $S^{-1}V$-module. 

We claim that $B'\cong  S^{-1}B$ which is enough. Indeed, let $z\in F$ be integral over $S^{-1}V$. We have 
$$z^e+\sum_{i=0}^{e-1}(a_i/s)z^i=0$$
for some $e\in {\bf N}$, $a_i\in V$ and $s\in S$. Then
$$(sz)^e+\sum_{i=0}^{e-1}s^{e-i-1}a_i(sz)^i=0$$
and so $sz\in B$, which shows our claim.

\hfill\ \end{proof}

Next it is  very useful the following  particular form of the Generalized Stability Theorem of F. V. Kuhlmann (see \cite[Theorem 1.1]{K1}, \cite[Theorem 5.1]{K}).

\begin{Theorem}\label{k}  Let $V\subset V'$ be an extension of   valuation rings   with the same residue field, $\Gamma\subset \Gamma'$ its value group extension and $K\subset K'$ its fraction field extension.  Assume  $K'/K$ is a finite type field extension and  $\Gamma'/\Gamma$ is a finitely generated   free $\bf Z$-module  of rank   $\trdeg K'/K $. If $V$ is defectless (resp. separably defectless) then $V'$ is  defectless (resp. separably defectless) too. 
\end{Theorem}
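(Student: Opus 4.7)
The plan is to deduce the statement as a direct specialization of Kuhlmann's Generalized Stability Theorem in the form cited at \cite[Theorem~1.1]{K1}, \cite[Theorem~5.1]{K}. That theorem asserts that along any finitely generated valued-field extension which is \emph{without transcendence defect} in the sense of Abhyankar, the property of being defectless (respectively separably defectless) is preserved. My only job, then, is to recognize that the hypotheses on the residue fields and on the rank of $\Gamma'/\Gamma$ together amount to a restatement of this no-transcendence-defect condition.

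To make this precise I would invoke Abhyankar's inequality for the finite-type valued extension $(K',v')/(K,v)$:
$$\trdeg(k'/k)\ +\ \dim_{\mathbb{Q}}\bigl((\Gamma'/\Gamma)\otimes_{\mathbb{Z}}\mathbb{Q}\bigr)\ \leq\ \trdeg(K'/K).$$
The first summand vanishes because $V$ and $V'$ share the same residue field, and the second summand equals $\trdeg(K'/K)$ because $\Gamma'/\Gamma$ is assumed to be a free $\mathbb{Z}$-module of that rank. Hence the inequality is an equality, which is exactly the Abhyankar case Kuhlmann's theorem handles. Having verified the hypothesis, I would simply quote \cite[Theorem~1.1]{K1} to conclude that if $(K,v)$ is defectless (resp.\ separably defectless), then so is $(K',v')$, i.e.\ $V'$ has the same property.

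The substantive obstacle is of course Kuhlmann's theorem itself, whose proof combines Henselization arguments with a filtration of $K'/K$ into one-step extensions (rational of value-group rank one, residually rational of residue-transcendence-degree one, or separable algebraic) together with a stability statement preserving defectlessness at each step. I would not attempt to reproduce that deep argument; it is being used here as a black box. The only live check in our setting is the Abhyankar equality above, which drops out immediately from the two numerical hypotheses on $k'=k$ and on $\rank_{\mathbb{Z}}(\Gamma'/\Gamma)$.
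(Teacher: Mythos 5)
Your proposal coincides with the paper's treatment of this statement: the paper presents Theorem \ref{k} as a quoted ``particular form'' of Kuhlmann's Generalized Stability Theorem (\cite[Theorem 1.1]{K1}, \cite[Theorem 5.1]{K}) and offers no independent proof, which is exactly your reduction via the no-transcendence-defect (Abhyankar equality) hypothesis. Your explicit check that equal residue fields plus $\Gamma'/\Gamma$ free of rank $\trdeg K'/K$ force equality in Abhyankar's inequality (with freeness also supplying the torsion-freeness needed for the separably defectless case) is precisely the verification implicit in the paper's citation, so the argument is correct and essentially identical.
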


\begin{Corollary}\label{ku}  Let $V$ be a  valuation ring of characteristic $p>0$  with  finitely generated value group $\Gamma$ and fraction field $K$. Assume  $V$ contains its residue field $k$ and $K=k(y)$ for some elements $y=(y_1,\ldots,y_r)$ of $V$ such that $\val(y_i)$, $1\leq i\leq r$ is a basis of the free $\bf Z$-module $\Gamma$.
Then $V$ is  defectless. 
\end{Corollary}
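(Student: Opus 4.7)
The plan is to deduce the corollary directly from the Generalized Stability Theorem (Theorem \ref{k}), taking as the base valuation ring the field $k$ itself, equipped with the trivial valuation.

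First I would observe that the field $k$, viewed as a trivially valued valuation ring, has fraction field $k$, residue field $k$, and value group $\{0\}$. This base is vacuously defectless, since for any finite field extension $F/k$ the integral closure of $k$ in $F$ is $F$, which is a finite free $k$-module, so Theorem \ref{b} applies.

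Next I would check that the hypotheses of Theorem \ref{k} are met for the extension $k \subset V$. The residue field of $V$ is $k$ by assumption, so the residue field extension is trivial. The value group extension is $0 \subset \Gamma$, and by hypothesis $\Gamma$ is free of rank $r$ with $\mathbb{Z}$-basis $\val(y_1),\ldots,\val(y_r)$. The fraction field extension is $k \subset K = k(y_1,\ldots,y_r)$, which is of finite type. The only slightly nontrivial point is verifying that $\trdeg_k K = r$, which reduces to showing that $y_1,\ldots,y_r$ are algebraically independent over $k$. This follows from the $\mathbb{Z}$-linear independence of the values $\val(y_i)$: any nontrivial algebraic relation $\sum_\alpha a_\alpha y^\alpha = 0$ with $a_\alpha \in k^*$ yields monomials $y^\alpha$ with pairwise distinct valuations $\sum_i \alpha_i \val(y_i)$, so the term of smallest valuation cannot be cancelled, contradicting the ultrametric inequality. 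Hence $\trdeg_k K = r = \rank\Gamma$.

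Finally I would apply Theorem \ref{k} to the extension $k \subset V$: since $k$ is defectless and all hypotheses are satisfied, $V$ is defectless. The argument also works in the separably defectless setting, since the theorem is stated in both versions. There is essentially no main obstacle here; the whole content is already packed into the Generalized Stability Theorem, and the corollary is the result of choosing the right base.
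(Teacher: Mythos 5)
Your proposal is correct and follows essentially the same route as the paper: the paper's own (very brief) proof likewise notes that $k$ is defectless for the trivial valuation, that $y$ is algebraically independent over $k$ (citing Bourbaki VI, (10.3), for which your valuation-of-monomials argument is the standard proof), and then applies Theorem \ref{k} to $k\subset V$.
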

For the proof note that $k$ is defectless for the trivial valuation, $y$ is algebraically independent over $k$ by \cite[Theorem 1, (10.3)]{Bou} and apply the above theorem to $k\subset V$.

\begin{Proposition}(Kuhlmann, \cite[Theorem 1.1]{K1})  Let $V$ be a  valuation ring,  $\mm$ its maximal ideal and $X=(X_1,\ldots,X_n)$ some variables. If  $V$ is defectless  then the valuation ring $V_1=V[X]_{\mm V[X]}$ is defectless too.
\end{Proposition}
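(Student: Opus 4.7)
The strategy is to identify $V_1$ as the Gauss extension of $V$ and then invoke Kuhlmann's Generalized Stability Theorem. First I would verify that the valuation $\val$ on $V$ extends to a valuation on the rational function field $K(X)=K(X_1,\dots,X_n)$ by the Gauss formula
$$\val\Bigl(\sum_\alpha a_\alpha X^\alpha\Bigr)=\min_\alpha\val(a_\alpha),\qquad a_\alpha\in V,$$
extended multiplicatively to $K(X)^*$, and that the associated valuation ring is precisely $V_1=V[X]_{\mm V[X]}$. This is a standard Gauss-lemma verification: any $h\in K(X)^*$ may be written $h=f/g$ with $f,g\in V[X]$; after dividing by a coefficient of $g$ of minimal valuation, one reduces to the case $g\in V[X]\setminus\mm V[X]$, and then $\val(h)\ge 0$ iff $f\in V[X]$. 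Consequently $V_1$ has the same value group $\Gamma$ as $V$, while its residue field is $k(X_1,\dots,X_n)$, a purely transcendental extension of the residue field $k$ of $V$.

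Next I would observe that the extension of valued fields $(K,\val)\subset(K(X),\val)$ has no transcendence defect, since
$$\trdeg_K K(X) \;=\; n \;=\; \trdeg_k k(X_1,\dots,X_n),$$
while the value group is unchanged. Kuhlmann's Generalized Stability Theorem \cite[Theorem 1.1]{K1}, in its full form, guarantees that defectlessness (respectively separable defectlessness) transfers under any finitely generated extension of valued fields without transcendence defect. Applying it to $(K,V)\subset(K(X),V_1)$ yields the proposition.

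The main subtlety is that Theorem \ref{k} as recorded in the paper is only the value-transcendental particular case of Kuhlmann's theorem (same residue field, nontrivial value group extension), whereas here the relevant instance is the dual residue-transcendental case (same value group, nontrivial residue field extension). Both are covered by the single general theorem of Kuhlmann, so one cannot invoke Theorem \ref{k} verbatim; one must instead appeal to the broader formulation available in \cite[Theorem 1.1]{K1}. No intermediate valuation ring, induction on $n$, or passage to the Henselization is needed: once the Gauss extension is identified as residue-transcendental, the conclusion is immediate from the general stability principle.
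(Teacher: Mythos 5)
Your proof is correct, but it takes a genuinely different route from the paper. You identify $V_1$ directly as the Gauss extension of $V$ (same value group $\Gamma$, residue field $k(X_1,\dots,X_n)$), observe that $(K,V)\subset (K(X),V_1)$ is a finitely generated extension without transcendence defect, and invoke the full form of Kuhlmann's Generalized Stability Theorem \cite[Theorem 1.1]{K1}; as you correctly note, the residue-transcendental case is not covered by the particular form recorded as Theorem \ref{k}, which assumes the residue field does not change. The paper instead arranges things so that only Theorem \ref{k} and Lemma \ref{s} are needed: after reducing to $n=1$ by induction, it extends $\val$ value-transcendentally by sending $X$ to a positive element $\gamma$ smaller than every positive element of $\Gamma$, so that the resulting valuation ring $W$ of $K(X)$ has the same residue field as $V$ and value group $\Gamma\oplus{\bf Z}\gamma$, whence $W$ is defectless by Theorem \ref{k}; it then recovers $V_1$ as a localization of $W$ (the coarsening of $w$ by the convex subgroup ${\bf Z}\gamma$ is exactly the Gauss valuation, which is the content of the paper's identification $W_{\mm W}\cong V_1$), and concludes by Lemma \ref{s} that localizations inherit defectlessness. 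The trade-off: your argument is shorter, needs no induction and no auxiliary infinitesimal value, but it leans on the stronger external citation of \cite[Theorem 1.1]{K1} in its residue-transcendental generality; the paper's detour is precisely designed to stay within the weaker statement it has set up, at the cost of the extra localization step (where, incidentally, the relevant prime of $W$ is the radical of $\mm W$, i.e.\ the prime corresponding to the convex subgroup ${\bf Z}\gamma$ --- essentially the same Gauss-valuation computation you carry out directly).
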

\begin{proof} Applying induction on $n$ we reduce to the case $n=1$. Let $\Gamma$ be the value group of $V$ and set $\Gamma'=\Gamma\oplus {\bf Z}\gamma$ where $\gamma$ is taken to be positive but $<$ than all positive elements of $\Gamma$ and let $w$ be the valuation on $K(X)$ extending $\val$ by $X\to \gamma$. By Theorem \ref{k} we see that $W$, the valuation ring of $w$, is defectless. Note that $\mm W$ is a prime ideal of $W$ and $W_{\mm W}\cong V_1$ is defectless by Lemma \ref{s}.
\hfill\ \end{proof}

An inclusion $V \subset V'$ of valuation rings is an \emph{immediate extension} if it is local as a map of local rings and induces equalities between the value groups and the residue fields of $V$ and $V'$.  It is dense if for any $x'\in V'$ and $\gamma\in \Gamma$ there exists $x\in V$ such that $\val(x-x')>\gamma$.

Let $\lambda$ be a fixed limit ordinal  and $v=\{v_i \}_{i < \lambda}$ a sequence of elements in $V$ indexed by the ordinals $i$ less than  $\lambda$. Then $v$ is \emph{pseudo convergent} if 

$\val(v_{i} - v_{i''} ) < \val(v_{i'} - v_{i''} )     \ \ \mbox{for} \ \ i < i' < i'' < \lambda$
(see \cite{Kap}, \cite{Sch}).
A  \emph{pseudo limit} of $v$  is an element $x \in V$ if 

$ \val(x -  v_{i}) = \val(v_{i} - v_{i'})) \ \ \mbox{for} \ \ i < i' < \lambda$. We say that $v$  is 
\begin{enumerate}
\item
\emph{algebraic} if some $f \in V[T]$ satisfies $\val(f(v_{i})) < \val(f(v_{i'}))$ for large enough $ i < i' < \lambda$;

\item
\emph{transcendental} if each $f \in V[T]$ satisfies $\val(f(v_{i})) = \val(f(v_{i'}))$ for large enough $i < i' < \lambda$.
\end{enumerate}
When for any $\gamma\in \Gamma$ it holds $\val(v_{i} - v_{i'})>\gamma$ for  $ i < i' < \lambda$
large we call $v$ {\em fundamental}.
\begin{Lemma}\label{l0}
 Let $V\subset V'$ be an immediate extension of one dimensional valuation rings and $K\subset K'$ their fraction field extension. If $V$ is separably defectless and $K'/K$ is separable and algebraic then $ K'/K$ is dense. Moreover, if $K'/K$ is not separable but  $V$ is defectless then $K'/K$ is still dense. 
 \end{Lemma}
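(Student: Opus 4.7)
The plan is to prove density by approximating each $x' \in V'$. Setting $F := K(x')$ and $W_1 := V' \cap F$, the inclusion $V \subset W_1$ is still an immediate algebraic extension, so I may assume $F/K$ is finite and $V' = W_1$; it then suffices to produce, for each $\gamma \in \Gamma$, some $a \in V$ with $\val(x' - a) > \gamma$.

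I would first dispatch the ``moreover'' clause by reducing it to the separable case. Let $F_s$ be the separable closure of $K$ in $F$ and put $W_1^s := V' \cap F_s$, so $V \subset W_1^s \subset V' = W_1$ are all immediate extensions. Each extension $W_j^s$ of $V$ to $F_s$ has a unique extension $W_j'$ to $F$ (as $F/F_s$ is purely inseparable), and $V$ defectless yields $\sum_j e(W_j'/V) f(W_j'/V) = [F:K]$ and $\sum_j e(W_j^s/V) f(W_j^s/V) = [F_s:K]$. Multiplicativity of $ef$ in towers together with the bound $e(W_j'/W_j^s) f(W_j'/W_j^s) \leq [F:F_s]$ for each $j$ then forces equality in every bound, so each $W_j^s$ is defectless for $F/F_s$. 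Since $W_1^s \subset W_1$ is an immediate purely inseparable extension of a defectless ring, defectlessness gives $[F:F_s] = e \cdot f = 1 \cdot 1 = 1$, so $x'$ is separable over $K$ and the ``moreover'' clause follows from the main statement.

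For the main statement, assume $F/K$ is finite separable and $V$ is separably defectless. Let $f \in V[T]$ be the monic minimal polynomial of $x'$, and let $\hat V \supset V$ be the completion with fraction field $\hat K$; since $\dim V = 1$, $\hat V$ is a rank-$1$ valuation ring and $V$ is dense in $\hat V$. Over $\hat K$ the separable polynomial $f$ factors as $f = \prod_{j=1}^r g_j$ into distinct irreducibles, in bijection with the extensions of $V$ to $F$ (labelled $W_1, \ldots, W_r$ with $W_1 = V' \cap F$), and separable defectlessness gives $\deg g_j = [\hat F_{W_j} : \hat K] = e_j f_j$ (zero defect). Since $V \subset W_1$ is immediate, $e_1 = f_1 = 1$, so $g_1$ is linear with a root $\alpha \in \hat K$; the map $x' \mapsto \alpha$ extends to a $K$-embedding $\iota : F \hookrightarrow \hat K$ pulling back the valuation on $\hat K$ to $W_1 = V'$. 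Density of $V$ in $\hat V$ then furnishes $a \in V$ with $\val(\alpha - a) > \gamma$, and hence $\val(x' - a) > \gamma$ in $V'$, completing the argument.

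The main obstacle is to justify, in the general rank-$1$ (not necessarily discrete) setting, the bijection between irreducible factors of $f$ over $\hat K$ and the extensions of $V$ to $F$, along with the degree formula $\deg g_j = e_j f_j$ under the (separable) defectlessness hypothesis. This rests on the fundamental equality $\sum e_j f_j = [F:K]$ granted by defectlessness combined with the general lower bound $[\hat F_{W_j} : \hat K] \geq e_j f_j$, which together force equality everywhere; a secondary subtlety, already used in the ``moreover'' reduction, is the propagation of defectlessness from $V$ down to the intermediate valuation ring $W_1^s$.
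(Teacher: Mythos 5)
Your argument is correct, but it follows a different route from the paper. The paper passes to the Henselization: by Kuhlmann's result (\cite[Theorem 2.3]{K}) separable defectlessness transfers from $V$ to $K^h$, and since the valuation extends uniquely to $K^h$ while $V\subset V'$ is immediate, one gets $[K^h(K'):K^h]=ef=1$, so $K'\subset K^h$ and density follows because $K\subset K^h$ is dense in rank one; the inseparable case is handled ``similarly'' with full defectlessness. You instead work directly with the completion: using the classical rank-one theory (for $F/K$ finite separable, $F\otimes_K\hat K\cong\prod_j\hat F_{W_j}$, equivalently the irreducible factors of the minimal polynomial over $\hat K$ correspond to the extensions $W_j$ with $\deg g_j=[\hat F_{W_j}:\hat K]$), separable defectlessness together with $[\hat F_{W_j}:\hat K]\ge e_jf_j$ forces $[\hat F_{W_1}:\hat K]=e_1f_1=1$, so $F$ embeds valuation-compatibly into $\hat K$ and density of $V$ in $\hat V$ finishes. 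This avoids invoking the transfer of defectlessness to the Henselization, at the price of relying on the classical completion theory for rank-one valuations; your treatment of the ``moreover'' clause is also different and arguably sharper, since you show via multiplicativity of $e,f$ in towers that full defectlessness kills the purely inseparable part ($[F:F_s]=1$), reducing it to the separable case rather than rerunning the argument.

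Two small caveats. First, the ``main obstacle'' you flag is not quite justified the way you state it: the fundamental equality plus the lower bound $[\hat F_{W_j}:\hat K]\ge e_jf_j$ only yields the degree formula \emph{after} one knows that every extension $W_j$ of $V$ to $F$ is induced by a factor of $f$ over $\hat K$, i.e.\ the upper bound $\sum_j[\hat F_{W_j}:\hat K]\le[F:K]$ (surjectivity of $F\otimes_K\hat K\to\prod_j\hat F_{W_j}$, via weak approximation and Henselianity of $\hat K$); this is a standard theorem for rank-one valuations, valid without any defectlessness, but it is an input, not a consequence of the counting. Second, the minimal polynomial of $x'\in V'$ lies in $K[T]$, not necessarily in $V[T]$ (elements of $V'$ need not be integral over $V$); this is harmless, since you only need $\val(\alpha)=\val(x')\ge 0$ to place $\alpha$ in $\hat V$.
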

 \begin{proof} We may reduce to the case when $K'/K$ is finite separable. Let $K^h$, $K'^h$ be the Henselizations of $K$ respectively $K'$ and $L=K^h(K')\subset K'^h$. By \cite[Theorem 2.3]{K} we have $K^h$ separably defectless and so we have $[L:K^h]=e_{L/K^h}f_{L/K^h}=1$ because there exists an unique extension of the valuation of $V$ to $K^h$. Thus $L\subset K^h$ and $K\subset K^h$ is dense because $\dim K=1$. The second statement goes similarly.
\hfill\ \end{proof}  

 \begin{Example} We consider  \cite[Example 3.1.3]{Po1} inspirated by \cite{O}. Let $k$ be a field
of characteristic $p > 0$, $X$ a variable, $\Gamma = {\bf Q}$ and $K$ the fraction field of the group
algebra $k[\Gamma]$, that is the rational function in $\{X^q\}_{q\in {\bf Q}}$. Let $P$ be the field of all formal
sums $z =
\sum_{n\in {\bf N}} a_n X^{\gamma_n}$ where $(\gamma_n)_{n\in {\bf N}}$ is a monotonically increasing sequence from
$\Gamma$ and $a_n \in k$. Set $\val(z) = \gamma_s$, where $s = \min \{n \in {\bf N} : a_n \not= 0\}$ if $ z \not= 0$ and let $V'$
be the valuation ring defined by $\val : P^*\to \Gamma$.

Let $\rho_n=(p^{n+1}-1)/(p-1)p^{n+1}$, 
$$y=-1+\sum_{n\geq 0} (-1)^nX^{\rho_n}$$
and $a_i=-1+\sum_{0\leq n\leq i} (-1)^nX^{\rho_n}$.
We have $1+\rho_n =p\rho_{n+1}$ for $n\geq 0$ and $p\rho_0=1$ and  y is a pseudo limit of the pseudo convergent
sequence $a = (a_i)_{i\in {\bf N}}$, which has no pseudo limit in $K$. Then $x$ is a root of the separable
polynomial $g=Y^p+XY+1\in K[Y]$ and the algebraic separable extension 
$V = V'\cap K\subset V'\cap K(y)$ is not dense.  
 The above lemma cannot be applied because $V$ is not separably defectless.
\end{Example} 

\begin{Lemma}\label{l1}
 Let $V\subset V'$ be an  immediate extension of one dimensional valuation rings, $K\subset K'$ their fraction field extension and $x\in V'$ a transcendental element over $K$. Assume $V$ is separably defectless and $x$ is a pseudo limit of a pseudo convergent sequence $a$ over $V$ which has no pseudo limit in $V$. Then $a$ is transcendental.
 \end{Lemma}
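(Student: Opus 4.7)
The plan is to suppose for contradiction that $a$ is algebraic and to choose $f\in V[T]$ monic of minimal degree $d$ such that $\val(f(a_i))$ is eventually strictly increasing. The case $d=1$ immediately yields a pseudo limit $c\in V$ (namely the root of $f$), contradicting the hypothesis, so $d\geq 2$. The idea is then to either force $f(x)=0$ (contradicting the transcendence of $x$) or to use the separably-defectless hypothesis via Lemma~\ref{l0} to exhibit a pseudo limit of $a$ in $V$.

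The key computation exploits the Hasse--Taylor expansion $f(X)=\sum_{j\geq 0} c_{i,j}(X-a_i)^j$, where each $c_{i,j}\in V$ depends polynomially on $a_i$ of degree $d-j$. For $j\geq 1$ minimality of $d$ forces $\val(c_{i,j})$ to be eventually constant, say $\beta_j\in\Gamma\cup\{\infty\}$; let $j_0$ be the smallest $j\geq 1$ with $\beta_{j_0}<\infty$, so $j_0=1$ when $f$ is separable and $j_0=p^e$ if $f(T)=h(T^{p^e})$ with $h$ separable in characteristic $p$. Setting $\gamma_i=\val(a_i-a_{i+1})=\val(x-a_i)$ and substituting $X=a_{i+1}$, for large $i$ the $j_0$-term dominates, giving $\val(f(a_{i+1})-f(a_i))=\beta_{j_0}+j_0\gamma_i$; combined with the strict increase of $\val(f(a_i))$ and the ultrametric inequality, this forces $\val(f(a_i))=\beta_{j_0}+j_0\gamma_i$. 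Substituting $X=x$ in the same expansion yields $\val(f(x))\geq \beta_{j_0}+j_0\gamma_i$ for all large $i$.

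If $a$ is fundamental, then $\gamma_i\to\infty$ in $\Gamma$, so $\val(f(x))=\infty$ and hence $f(x)=0$; in the inseparable case this reads $h(x^{p^e})=0$, which still contradicts the transcendence of $x^{p^e}$ over $K$. Otherwise, $\dim V=1$ makes $\Gamma\subset\mathbb R$ archimedean and $\gamma_i\nearrow\gamma^*\in\mathbb R$; since $\Gamma$ is unbounded I may pick $\gamma\in\Gamma$ with $\gamma>\gamma^*$. In this non-fundamental case I would reduce to a separable minimal polynomial (in characteristic $p$, replacing $a$ by $\{a_i^{p^e}\}$ with pseudo limit $x^{p^e}$ which is still transcendental), then invoke the standard Kaplansky construction to produce a root $\alpha\in\bar K$ of this separable polynomial that is a pseudo limit with $K(\alpha)/K$ an immediate extension. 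Lemma~\ref{l0} and the separably-defectless hypothesis give that $K\subset K(\alpha)$ is dense, so there exists $c\in V$ with $\val(\alpha-c)>\gamma>\gamma_i$ for all $i$; the ultrametric inequality then yields $\val(c-a_i)=\val(\alpha-a_i)=\gamma_i$, exhibiting $c$ as a pseudo limit of $a$ in $V$ and producing the desired contradiction.

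The delicate point I anticipate is cleanly closing the non-fundamental inseparable case when $V$ is not perfect: the density argument above naturally produces a pseudo limit $c\in V$ of the twisted sequence $\{a_i^{p^e}\}$, and converting this to a pseudo limit of $a$ itself requires extracting a $p^e$-th root of $c$ in $V$, which is not automatic without perfectness. The fundamental-case Taylor argument is robust to inseparability (the contradiction passes through $x^{p^e}$), so this obstruction is confined to the non-fundamental inseparable setting, and would need to be addressed via a Frobenius-style argument leveraging the immediacy of $V\subset V'$ to force $c^{1/p^e}\in V$.
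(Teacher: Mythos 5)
Your overall strategy for the separable case is essentially the paper's: assume $a$ algebraic, take a minimal-degree witness, use Kaplansky's construction of a finite separable immediate extension $K(\alpha)$ in which $\alpha$ is a pseudo limit, then use the separably defectless hypothesis via Lemma \ref{l0} to get density and hence a pseudo limit of $a$ in $V$, a contradiction. (Your separate treatment of the fundamental case is harmless but unnecessary: if $a$ is fundamental and $x$ is transcendental with $f(x)\neq 0$, then $\val(f(a_i)-f(x))\geq\val(a_i-x)$ is eventually larger than $\val(f(x))$, so $\val(f(a_i))$ is eventually constant and $a$ is automatically transcendental.)

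The genuine gap is exactly the one you flag and do not close: the inseparable, non-fundamental case. Your reduction via the Frobenius twist, replacing $a$ by $(a_i^{p^e})$ and $x$ by $x^{p^e}$, produces at best a pseudo limit $c\in V$ of the twisted sequence, and this contradicts nothing: to contradict the hypothesis you need a pseudo limit of $a$ itself, i.e.\ you need $c^{1/p^e}\in V$ (since $\val(c-a_i^{p^e})=p^e\val(c^{1/p^e}-a_i)$ only identifies $c^{1/p^e}$, which lives in the perfect closure, as a pseudo limit of $a$). Immediacy of $V\subset V'$ does not supply $p^e$-th roots — $V$ need not be perfect, and nothing in the hypotheses forces $c$ to be a $p^e$-th power in $K$ — so the ``Frobenius-style argument'' you invoke is not an argument; moreover, applying Kaplansky's theorem to the twisted sequence already requires knowing it has no pseudo limit in $K$, which is the same unproved statement. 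The paper sidesteps this entirely by staying with the original sequence and perturbing the minimal-degree witness: since $\val(h(x))>\val(h(a_j))$ for large $j$, one may choose $b\in V$ with $\val(bx)>\val(h(x))$ and replace $h$ by $h'=h+bX$; then $\val(h'(a_j))=\val(h(a_j))$ for large $j$, so $h'$ is still a minimal-degree witness of algebraicity, and $h'$ is separable, so Kaplansky's Theorem 3 and Lemma \ref{l0} apply directly. Adopting this perturbation (or some equivalent device) is what your proof is missing; as written, the inseparable non-fundamental case remains open.
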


\begin{proof} Let $a=(a_j)_{j<\lambda}$ and assume  $a$ is algebraic. Let $h\in V[X]$ be a primitive polynomial of minimal degree among the polynomials $f\in V[X]$ such that $\val(f(a_j))<\val(f(a_{j+1})) $ for all $j<\lambda$. 
We may take $h$ separable. Indeed, assume that $h$ is not separable. We have $\val(h(x))>\val(h(a_j))$ for $j<\lambda$ large. Choose $b\in V$ such that $\val(bx)>\val(h(x))$ and set $h'=h+bX$. Note that $\val(x)=\val(a_j)$ for $j$ large, which implies $\val(ba_j)=\val(bx)>\val(h(x))>\val(h(a_j))$ for $j$ large. It follows that $\val(h'(a_j))=\val(h(a_j))$ and we may replace $h$ by $h'$, which is separable.

 By \cite[Theorem 3]{Kap} there exists a finite separable immediate extension $L=K(z)$ of $K$ such that $h(z)=0$ and  $z$ is a pseudo limit of $a$. By Lemma \ref{l0} $L$ is dense over $K$ and so $a$ has a pseudo limit in $K$ (see e.g. \cite[Lemma 2.5]{Po1}), which is false. 
\hfill\ \end{proof}

\begin{Proposition}\label{p0}
 Let $V\subset V'$ be an  immediate extension of one dimensional valuation rings and $K\subset K'$ their fraction field extension. Assume $V$ is separably defectless and $K'=K(x)$ for some $x\in K'$ which is  transcendental over $V$.  Then $V'$ is a filtered  union of localizations of its polynomial $V$-subalgebras in one variable.
 \end{Proposition}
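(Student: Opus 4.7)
The plan is to exploit a transcendental pseudo-convergent sequence in $V$ with pseudo-limit $x$. Since $x$ is transcendental over $K$ it lies outside $V$; after replacing $x$ by $1/x$ if $\val(x) < 0$, we may assume $x \in V'$. As $V \subset V'$ is immediate, the standard construction yields a pseudo-convergent sequence $\{a_j\}_{j < \lambda}$ in $V$ with pseudo-limit $x$ and no pseudo-limit in $V$; Lemma \ref{l1} then tells us this sequence is transcendental, so $\val(h(x)) = \val(h(a_j))$ for every $h \in V[T]$ and all $j$ sufficiently large. Set $\epsilon_j := \val(x - a_j)$, pick $\tau_j \in V$ with $\val(\tau_j) = \epsilon_j$, and put $w_j := (x - a_j)/\tau_j \in {V'}^{\times}$. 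Since $K(w_j) = K(x) = K'$, the element $w_j$ is transcendental over $K$, so $V[w_j]$ is a polynomial $V$-algebra in one variable; setting $\pp_j := \mm' \cap V[w_j]$ gives $V[w_j]_{\pp_j} \subset V'$.

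To show $V' = \bigcup_{j < \lambda} V[w_j]_{\pp_j}$, take $z \in V'$, write $z = f(x)/g(x)$ with $f, g \in V[x]$, and put $m := \val(g(x))$, $m' := \val(f(x)) \geq m$. Expanding via Hasse derivatives,
\[
  g(a_j + \tau_j Y) = \sum_{k \geq 0} g^{[k]}(a_j)\, \tau_j^k\, Y^k \in V[Y],
\]
the $k$-th coefficient has valuation $\val(g^{[k]}(a_j)) + k\epsilon_j$, and substituting $Y = w_j$ recovers $g(x)$. Combining the Taylor identity $g(x) = \sum_k g^{[k]}(a_j)(x - a_j)^k$ with the transcendental pseudo-convergent sequence property applied to $g$ and each $g^{[k]}$, the content of $g(a_j + \tau_j Y)$ equals $m$ for $j$ large. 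Choosing $s \in V$ with $\val(s) = m$ we factor $g(a_j + \tau_j Y) = s\, \tilde G_j(Y)$ with $\tilde G_j \in V[Y]$ primitive; then $\tilde G_j(w_j) = g(x)/s$ has valuation $0$, so it is a unit of $V'$ and lies outside $\pp_j$. Factoring $f(a_j + \tau_j Y) = s'\, \tilde F_j(Y)$ analogously with $\val(s') = m'$ and noting $s'/s \in V$, we obtain $z = (s'/s)\, \tilde F_j(w_j)/\tilde G_j(w_j) \in V[w_j]_{\pp_j}$.

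For filteredness, when $j < j'$ one computes
\[
  w_j = \frac{a_{j'} - a_j}{\tau_j} + \frac{\tau_{j'}}{\tau_j}\, w_{j'},
\]
where $(a_{j'} - a_j)/\tau_j$ is a unit of $V$ (its valuation is $\epsilon_j - \epsilon_j = 0$) and $\tau_{j'}/\tau_j \in \mm$ (its valuation is $\epsilon_{j'} - \epsilon_j > 0$); hence $V[w_j] \subset V[w_{j'}]$, and any $q(w_j)$ with $\val(q(w_j)) = 0$ remains outside $\pp_{j'}$, so $V[w_j]_{\pp_j} \subset V[w_{j'}]_{\pp_{j'}}$. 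Thus $\{V[w_j]_{\pp_j}\}_{j < \lambda}$ is a totally ordered, hence filtered, family of localizations of polynomial $V$-subalgebras in one variable whose union is $V'$. The main obstacle I anticipate is the content computation: one must combine the Taylor identity with the transcendental pseudo-convergent sequence property (applied to $g$ and to each of its Hasse derivatives) to force every coefficient of $g(a_j + \tau_j Y)$ to have valuation at least $m$ for $j$ large enough.
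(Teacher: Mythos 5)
Your overall route is the same as the paper's: reduce, via Kaplansky's Theorem 1 and Lemma \ref{l1}, to the situation where $x$ is a pseudo limit of a transcendental pseudo convergent sequence over $V$ with no pseudo limit in $V$; at that point the paper simply invokes \cite[Lemma 3.2]{Po} (or \cite[Lemma 15]{P}), whereas you write out that cited step explicitly with the rings $V[w_j]_{\pp_j}$, $w_j=(x-a_j)/\tau_j$, $\pp_j=\mm'\cap V[w_j]$ --- which is in substance the proof of the cited lemma. Your verification that the union of these rings is $V'$ and the chain/filteredness argument are correct.

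The one step you leave open --- that every coefficient of $g(a_j+\tau_j Y)$ has valuation at least $m=\val(g(x))$ for $j$ large --- is true, but the hint you give is not enough as stated: from the Taylor identity $g(x)=\sum_k g^{[k]}(a_j)(x-a_j)^k$ together with eventual constancy of the $\val(g^{[k]}(a_j))$ you only obtain $m\geq \min_k\bigl(\val(g^{[k]}(a_j))+k\epsilon_j\bigr)$, which is the wrong direction, and possible cancellation among terms of equal value blocks any termwise conclusion. The standard way to close it is Kaplansky's lemma on Taylor expansions along a pseudo convergent sequence (see \cite{Kap}, or \cite[Ch.~II]{Sch}): writing $m_k$ for the eventual value of $\val(g^{[k]}(a_j))$ (constant since $v$ is transcendental), the finitely many values of $\gamma$ at which two of the affine functions $\gamma\mapsto m_k+k\gamma$ agree are eventually avoided by the strictly increasing $\epsilon_j$, so for $j$ large the minimum of $m_k+k\epsilon_j$ over $k\geq 1$ is attained at a unique $k$; hence for $j<j'$ large one gets $\val(g(a_{j'})-g(a_j))=\min_{k\geq 1}(m_k+k\epsilon_j)$ from the expansion of $g(a_{j'})$ around $a_j$. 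Since $\val(g(a_j))=\val(g(a_{j'}))=m$ eventually, this minimum is $\geq m$, and together with the constant term $g(a_j)$ of value $m$ this gives content exactly $m$, as you need. With that ingredient inserted, your argument is complete and matches the content of the lemma the paper cites.
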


\begin{proof} Then $x$ is a pseudo limit of a pseudo convergent sequence $a$ over $V$ which has no pseudo limit in $V$ by \cite[Theorem 1]{Kap}. But $a$ is transcendental by Lemma \ref{l1} and using \cite[Lemma 3.2]{Po} or  \cite[Lemma 15]{P} we are done.
\hfill\ \end{proof} 

\begin{Corollary} \label{C} Let $V$ be a one dimensional valuation ring, $K$ its fraction field, $k$ its residue field and $\Gamma$ its value group. Assume $k\subset V$ and $K=k(x)$ for some   system of algebraically independent  elements  $x=(x_1,\ldots,x_r)\in V^r$ over $k$ such that either $\Gamma=\oplus_{i=1}^r {\bf Z}\val(x_i)$, or  $\Gamma=\oplus_{i=1}^{r-1} {\bf Z}\val(x_i)$. Then $V'$ is a filtered  union of localizations of its polynomial $V$-subalgebras in $r$ variables.  
\end{Corollary}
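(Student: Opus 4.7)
The plan is to split on the two alternatives of the hypothesis on $\Gamma$ and, in each, to exhibit, for any finite subset $S \subset V$, a polynomial $k$-subalgebra $B = k[y_1,\ldots,y_r] \subset V$ in $r$ variables such that $S \subset B_Q$ with $Q := B \cap \mm_V$. Filteredness of the resulting family is then automatic, since the construction applied to $S_1 \cup S_2$ refines both $B(S_1)$ and $B(S_2)$.

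For the Abhyankar case $\Gamma = \oplus_{i=1}^{r}{\bf Z}\val(x_i)$ I argue directly by a toric change of variables. The $\omega_i := \val(x_i)$ are $\QQ$-linearly independent positive reals, so for any $f = \sum_I c_I x^I \in k[x_1,\ldots,x_r]$ the value $\val(f) = \min\{\omega\cdot I : c_I \neq 0\}$ is attained at a unique multi-index $I_0(f)$. Given a finite $S \subset V$, write each $v \in S$ as $f_v/g_v$ and collect into a finite set $\mathcal{N} \subset \ZZ^r$ the vectors $M_v := I_0(f_v) - I_0(g_v)$ together with all nonzero differences $I - I_0(f_v)$, $I - I_0(g_v)$ for monomials actually appearing in $f_v,g_v$. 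By the $\QQ$-linear independence of the $\omega_i$ and uniqueness of $I_0$, every nonzero $N \in \mathcal{N}$ satisfies $\omega \cdot N > 0$. A fan-refinement argument in ${\bf R}^r$ then yields a $\ZZ$-basis $E_1,\ldots,E_r$ of $\ZZ^r$ with $\omega \cdot E_i > 0$ and $\mathcal{N}\setminus\{0\} \subset \sum_i \NN E_i$. Setting $y_i := x^{E_i}$ gives algebraically independent elements of $\mm_V$, and $B := k[y_1,\ldots,y_r]$ is a polynomial $k$-subalgebra of $V$ in $r$ variables. By construction $x^{M_v} \in B$, and each of $f_v/x^{I_0(f_v)}$, $g_v/x^{I_0(g_v)}$ is a polynomial in the $y_i$ with constant term in $k^*$, hence a unit in $B_Q$; so $v = x^{M_v}\cdot(f_v/x^{I_0(f_v)})/(g_v/x^{I_0(g_v)}) \in B_Q$.

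For the immediate case $\Gamma = \oplus_{i=1}^{r-1}{\bf Z}\val(x_i)$ I set $V_0 := V \cap k(x_1,\ldots,x_{r-1})$. A direct verification, using the $\QQ$-linear independence of $\val(x_1),\ldots,\val(x_{r-1})$, shows $V_0$ is a one-dimensional valuation ring of $k(x_1,\ldots,x_{r-1})$ with value group $\Gamma$ and residue field $k$, so $V_0 \subset V$ is immediate and $K = k(x_1,\ldots,x_{r-1})(x_r)$ with $x_r$ transcendental over $V_0$. Corollary~\ref{ku} makes $V_0$ defectless, so Proposition~\ref{p0} presents $V$ as a filtered union of localizations $V_0[y]_Q$ of polynomial $V_0$-algebras in one variable. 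Applying the Abhyankar case (with $r-1$ variables) to $V_0$ writes $V_0 = \bigcup_\alpha k[z^{(\alpha)}_1,\ldots,z^{(\alpha)}_{r-1}]_{P_\alpha}$; substituting into each $V_0[y]_Q$ yields $V_0[y]_Q = \bigcup_\alpha k[z^{(\alpha)}_1,\ldots,z^{(\alpha)}_{r-1},y]_{Q_\alpha}$, a filtered union of localizations of polynomial $k$-subalgebras of $V$ in $r$ variables.

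The main technical point I expect to verify carefully is the fan-refinement step in the Abhyankar case: given finitely many integer vectors in the open half-space $\{\omega \cdot N > 0\} \subset {\bf R}^r$, construct a unimodular $\ZZ$-basis of $\ZZ^r$ all of whose elements pair strictly positively with $\omega$ and whose positive cone contains the given vectors. This is a standard iterated simplicial subdivision, but uses the $\QQ$-linear independence of the $\omega_i$ essentially; without it, either the half-space could fail to be full-dimensional or the vectors could lie on its boundary, obstructing the construction.
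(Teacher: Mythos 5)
Your proof is correct, and in the immediate case $\Gamma=\oplus_{i=1}^{r-1}{\bf Z}\val(x_i)$ it is the same argument as the paper's: pass to $V_0=V\cap k(x_1,\ldots,x_{r-1})$, check that $V_0\subset V$ is immediate with $x_r$ transcendental over $\Frac(V_0)$, get $V_0$ defectless from Corollary \ref{ku}, apply Proposition \ref{p0}, and substitute the description of $V_0$ coming from the Abhyankar case. Where you genuinely diverge is that Abhyankar case $\Gamma=\oplus_{i=1}^{r}{\bf Z}\val(x_i)$: the paper disposes of it by citing \cite[Theorem 1, VI, (10.3)]{Bou} together with \cite[Lemma 4.6]{Po} (equivalently \cite[Lemma 26 (1)]{P}), whereas you reprove that result from scratch via the monomial formula $\val(\sum_I c_Ix^I)=\min\{\omega\cdot I: c_I\neq 0\}$ and a Perron/toric change of variables $y_i=x^{E_i}$. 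This buys a self-contained proof that makes the combinatorial content of the cited lemmas explicit, at the price of the fan-refinement step you flag. That step is indeed true: let $\sigma$ be the cone generated by the given vectors together with the standard basis vectors (all pair strictly positively with $\omega$ since $\omega_i>0$, so $\sigma$ is pointed and full dimensional), take a smooth subdivision of the dual cone $\sigma^\vee$; because the coordinates of $\omega$ are $\QQ$-linearly independent, $\omega$ lies on no rational hyperplane, hence in the interior of a unique unimodular cone $\tau\subset\sigma^\vee$ of the subdivision, and the basis $E_1,\ldots,E_r$ dual to the generators of $\tau$ satisfies $\omega\cdot E_i>0$ and $\sigma\cap{\bf Z}^r\subset\sum_i\NN E_i$. (Perron's algorithm, as in Zariski's original treatment and in \cite{Po}, gives the same conclusion.)

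One small repair: filteredness is not quite ``automatic'' from applying your construction to $S_1\cup S_2$ --- the resulting $B(S_1\cup S_2)_Q$ contains the finite sets $S_1$ and $S_2$, but not a priori the rings $B(S_1)_{Q_1}$ and $B(S_2)_{Q_2}$. It does, however, follow from the same fan-refinement lemma applied to the exponent vectors $E^{(1)}_1,\ldots,E^{(1)}_r,E^{(2)}_1,\ldots,E^{(2)}_r$ of the two bases (all of which pair strictly positively with $\omega$), which yields a common unimodular refinement and hence a member of the family containing both subalgebras; a similar two-step refinement, using filteredness over $V_0$ first and then over $k$, handles the doubly indexed family in the immediate case. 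With these routine additions your argument is complete.
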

\begin{proof} The first case is a consequence of \cite[Theorem 1, VI, (10.3)]{Bou} and \cite[Lemma 4.6]{Po} (see also \cite[Lemma 26 (1)]{P}). In the second case we see that $V$ is an immediate extension of $W=V\cap k(x_1,\ldots,x_{r-1})$. As above $W$ is a filtered  union of localizations of its polynomial $V$-subalgebras in $(r-1)$ variables. By Corollary \ref{ku} we have $W$ defectless and using Proposition  \ref{p0} we are done. 
\hfill\ \end{proof}

\vskip 0.5 cm

\section{Valuation rings as limits of smooth algebras}

\begin{Proposition}\label{p2}  
Let $V\subset V'$ be an  immediate extension of one dimensional valuation rings and  $K\subset K'$ their fraction field extension.   Assume $V$ is separably defectless and $K'/K$ is  algebraic separable.
 Then $V'$ is a filtered  union of its smooth $V$-subalgebras.
\end{Proposition}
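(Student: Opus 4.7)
The plan is to reduce the proposition to a density assertion and then invoke a form of General N\'eron Desingularization for dense immediate algebraic extensions.

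First I would apply Lemma \ref{l0}: the hypotheses that $V$ is one-dimensional and separably defectless and that $K'/K$ is separable and algebraic give that $V \subset V'$ is dense, i.e., for every $x'\in V'$ and every $\gamma$ in the common value group $\Gamma$ there exists $x \in V$ with $\val(x-x')>\gamma$.

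Next, given any finitely generated $V$-subalgebra $A \subset V'$, I would present $A = V[Y_1,\ldots,Y_n]/I$ and regard the inclusion $A\hra V'$ as a $V'$-valued point of $\Spec A$. Using the density I would approximate this point by nearby $V$-points and apply a form of General N\'eron Desingularization for dense immediate algebraic extensions -- either the N\'eron-Schappacher theorem \cite[Theorem 4.1]{Po1} or the variant Lemma \ref{k'} established later in the paper -- to factor $A\hra V'$ as $A \to B \to V'$ with $B$ a smooth finitely presented $V$-algebra. By construction of the desingularization, the map $B\to V'$ can be arranged to be injective, so $B$ sits inside $V'$ as a smooth $V$-subalgebra containing $A$. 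Filteredness of the resulting family is then immediate: any two such smooth subalgebras $B_1, B_2 \subset V'$ generate a finitely generated $V$-subalgebra of $V'$, which by the same argument embeds into a further smooth $V$-subalgebra of $V'$; and covering is trivial since every $v'\in V'$ lies in $V[v']$.

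The hard part will be the desingularization input: in positive characteristic one must guarantee that an arbitrary finitely generated $V$-subalgebra of $V'$ factors through a smooth $V$-subalgebra of $V'$, using only the separability of $K'/K$ and the density produced by Lemma \ref{l0}. This is precisely the place where the separably defectless hypothesis on $V$ is used essentially (it is needed to apply Lemma \ref{l0} in the first place). The remaining bookkeeping with the filtered system is routine.
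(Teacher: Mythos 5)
Your proposal follows exactly the paper's own two-step argument: Lemma \ref{l0} gives that $V\subset V'$ is dense, and then the N\'eron--Schappacher theorem (\cite[Theorem 4.1]{Po1}) yields the factorization through smooth $V$-algebras; the paper's proof is precisely this, stated even more briefly. Your additional remarks on passing from a filtered direct limit of smooth algebras to a filtered union of smooth subalgebras go beyond what the paper writes down (it simply asserts the colimit statement), but the core approach is the same.
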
  
\begin{proof} 
By Lemma \ref{l0} the extension $V\subset V'$ is dense.  Then by a N\'eron-Scappacher Theorem (see e.g. \cite[Theorem 4.1]{Po1}) we see  that $V'$ is a filtered direct limit of smooth $V$-algebras. 
 \hfill\ \end{proof}

\begin{Corollary}\label{C'} Let $V$ be a one dimensional valuation ring of characteristic $p>0$  with a free (over $\bf Z$) value group $\Gamma$ of rank $r$ and fraction field $K$. Assume  $V$ contains its residue field $k$ and  $x=(x_1,\ldots,x_r)\in V^r$ is a system of elements such that  $\Gamma=\oplus_{i=1}^r {\bf Z}\val(x_i)$ and $K/k(x)$ is algebraic 
 separable. 
Then $V$ is a filtered union of its smooth $k$-subalgebras.
\end{Corollary}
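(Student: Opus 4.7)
The plan is to reduce to a suitable immediate extension and then combine two earlier results. First I would set $W=V\cap k(x)$, the induced valuation ring of $k(x)$. Since $k\subset V$ is contained in $W$ and $V$ has residue field $k$, the residue field of $W$ is also $k$; since $\val(W)$ contains each $\val(x_i)$ and sits inside $\Gamma=\oplus_{i=1}^r{\bf Z}\val(x_i)$, one gets $\val(W)=\Gamma$. Thus $W\subset V$ is an immediate extension of one-dimensional valuation rings, and by hypothesis the induced fraction field extension $k(x)\subset K$ is algebraic and separable. Corollary \ref{ku} then gives that $W$ is defectless, in particular separably defectless.

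Next, Proposition \ref{p2} will yield that $V$ is a filtered union of its smooth $W$-subalgebras, while Corollary \ref{C} (first case, $\Gamma=\oplus_{i=1}^r{\bf Z}\val(x_i)$) writes $W$ itself as a filtered union of smooth $k$-subalgebras (localizations of polynomial $k$-algebras in $r$ variables). To combine these, I would start from a finite subset $S\subset V$, choose a smooth $W$-subalgebra $A\subset V$ containing $S$, and invoke the standard limit theorems for finitely presented smooth algebras along the filtered colimit $W=\bigcup B$ of its smooth $k$-subalgebras to obtain some such $B$ and a smooth $B$-algebra $A_B$ with $A_B\otimes_B W=A$. Enlarging $B$ if necessary so that it contains the finitely many coefficients needed to express the elements of $S$ as polynomials in chosen generators of $A$ over $W$, one has $S\subset A_B$. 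Flatness of $A_B$ over $B$ together with $B\hookrightarrow W$ renders $A_B\to A_B\otimes_B W=A$ injective, so $A_B\subset V$; transitivity of smoothness then gives $A_B$ smooth over $k$. Hence $V$ is a filtered union of smooth $k$-subalgebras.

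The main technical point will be this descent/limit step; the substantive valuation-theoretic input making Proposition \ref{p2} available is the separable defectlessness of $W$, which is the content of Corollary \ref{ku} given that $\val(x_1),\ldots,\val(x_r)$ is a ${\bf Z}$-basis of $\Gamma$.
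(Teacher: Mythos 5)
Your proposal is correct and follows essentially the same route as the paper: $W=V\cap k(x)$ is defectless by Corollary \ref{ku}, Proposition \ref{p2} makes $V$ a filtered union of smooth $W$-subalgebras, and $W$ is a filtered union of smooth $k$-subalgebras (the paper cites Bourbaki and \cite[Lemma 4.6]{Po}, which is exactly the first case of Corollary \ref{C} that you invoke). Your explicit limit/descent argument merely fills in the combination step the paper dismisses with ``which is enough,'' so there is nothing to object to.
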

\begin{proof}  By Corollary \ref{ku} we see that $W=V\cap k(x)$ is defectless and so  $V$ is a filtered  union of its smooth $W$-subalgebras using the above proposition.   But $W$ is a filtered union of its smooth $k$-algebras  (see \cite[Theorem 1, VI, (10.3)]{Bou}, \cite[Lemma 4.6]{Po},  \cite[Lemma 26 (1)]{P}), which is enough. 
 \hfill\ \end{proof}

We need the following lemma   (\cite[Lemma 7]{P} which is an extension of   
 \cite[Proposition 3]{KPP}, and \cite[Proposition 5]{P1}).

\begin{Lemma} \label{k'}
For a commutative diagram of ring morphisms

\xymatrix@R=0pt{
& B \ar[rd] & & && B \ar[dd]^{b \, \mapsto\, a} \ar[rd] & \\
A \ar[rd] \ar[ru] & & V & \mbox{that factors as follows} & A \ar[ru]\ar[rd] & & V/a^3V \\ 
& A' \ar[ru] & & && A'/a^3A' \ar[ru] &
}
\noindent with $B$ finitely presented over $A$, a $b \in B$ that is standard over $A$ (this means a special element from the ideal $H_{B/A}$ defining the non smooth locus of $B$ over $A$, for details see for example \cite[Lemma 4]{P}), and a nonzerodivisor $a \in A'$ that maps to a nonzerodivisor in $V$ that lies in every maximal ideal of $V$,
there is a smooth $A'$-algebra $S$ such that the original diagram factors as follows:

\hskip 4 cm\xymatrix@R=0pt{
& B \ar[rdd] \ar[rrd] & & \\
A \ar[rd] \ar[ru] & &  & V. \\
& A' \ar[r]  & S \ar[ru] &
}

 \end{Lemma}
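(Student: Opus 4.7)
The plan is to apply the standard Elkik/Néron Newton-perturbation technique, closely following \cite[Proposition 3]{KPP} and \cite[Proposition 5]{P1}. First, I would fix a finite presentation $B = A[Y_1,\ldots,Y_n]/(f_1,\ldots,f_m)$ and unpack the definition of $b$ being standard over $A$ from \cite[Lemma 4]{P}: after relabeling, there is an integer $r \leq \min(n,m)$ such that $b$ is (a multiple of) the $r \times r$ Jacobian minor $\Delta = \det(\partial f_j/\partial Y_i)_{1 \leq i,j \leq r}$, and $b \cdot f_k \in (f_1,\ldots,f_r)$ in $A[Y]$ for every $k > r$. These two algebraic properties are exactly what drive the Newton iteration and convert the Jacobian smoothness criterion into a usable tool once $\Delta$ is inverted.

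Next, I use the given factorization $B \to A'/a^3A' \to V/a^3 V$ to choose lifts $\bar y_i \in A'$ of the images of the $Y_i$, so that $f_j(\bar y) = a^3 c_j$ for some $c_j \in A'$ (the exponent $3$ is what leaves one factor of $a$ for the Newton step and one factor for control of the quadratic remainder). Introducing new variables $T_1,\ldots,T_n$ and substituting $Y_i = \bar y_i + aT_i$, a Taylor expansion at $\bar y$ gives
\[
f_j(\bar y + aT) \;=\; a\Bigl( a^2 c_j + \sum_i (\partial f_j/\partial Y_i)(\bar y)\,T_i + a\, G_j(T)\Bigr) \;=:\; a\, H_j,
\]
with $G_j \in A'[T]$. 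I then define
\[
S \;:=\; A'[T_1,\ldots,T_n, U]\big/\bigl(H_1,\ldots,H_r,\; U\Delta(\bar y + aT) - 1\bigr).
\]
The Jacobian of $(H_1,\ldots,H_r)$ with respect to $(T_1,\ldots,T_r)$ has determinant $\Delta(\bar y) + a(\cdots)$, which is a unit in $S$ by the defining relation of $U$, so $S$ is smooth over $A'$ of relative dimension $n - r$ by the Jacobian criterion.

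For the factorization, the map $B \to S$ sends $Y_i \mapsto \bar y_i + aT_i$: for $j \le r$ the identity $a H_j = f_j(\bar y+aT)$ together with $a$ being a nonzerodivisor in $A'[T]$ gives $f_j(\bar y+aT) = 0$ in $S$; for $k > r$ the standardness relation $b\cdot f_k \in (f_1,\ldots,f_r)$, evaluated at $\bar y + aT$, yields $b(\bar y+aT)\cdot f_k(\bar y+aT) = 0$ in $S$, and since $b$ is a multiple of $\Delta$ and the latter is a unit in $S$, this forces $f_k(\bar y+aT) = 0$. The map $S \to V$ sends $T_i \mapsto (y_i^V - \bar y_i)/a$, where $y_i^V$ is the image of $Y_i$ in $V$; this lies in $V$ because $y_i^V - \bar y_i \in a^3V$ and $a$ is a nonzerodivisor in $V$. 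The main technical obstacle — and the clause that uses the full strength of the hypotheses — is verifying that this last map is well-defined on the localization forcing $U\Delta = 1$, i.e.\ that $\Delta(y^V)$ is a unit in $V$; this is precisely where the assumption that $a$ lies in every maximal ideal of $V$ enters, combined with the factorization modulo $a^3$ and the algebraic structure of the standard element $b$, to ensure $\Delta(y^V)$ is coprime to $a$ and hence invertible in $V$. Once this compatibility is secured, all commutativity requirements reduce to identities built into the substitution $Y_i = \bar y_i + aT_i$.
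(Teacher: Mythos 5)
Your construction breaks down at exactly the point you flag as ``the main technical obstacle,'' and the hypotheses do not rescue it --- they force the opposite of what you need. The label $b\mapsto a$ in the left-hand diagram is a datum: the standard element $b$ maps to the class of $a$ in $A'/a^3A'$, hence its image $b(y)$ in $V$ (where $y\in V^n$ denotes the image of $Y$) satisfies $b(y)\equiv a \bmod a^3V$, so $\val(b(y))=\val(a)>0$ because $a$ is a nonzerodivisor lying in every maximal ideal of $V$. Writing $b=N\Delta$ with $N\in((f_1,\dots,f_r):I)$, it follows that $N(y)\Delta(y)$ has the same value as $a$; so whenever $N(y)$ is a unit (e.g.\ $N=1$, the complete intersection case, and essentially the situation in the application in Corollary \ref{co1}, where the standard element itself maps onto $a$), the minor $\Delta(y)$ is a \emph{non-unit} of $V$. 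Consequently your map $S\to V$, $T_i\mapsto (y_i-\bar y_i)/a$, cannot be defined on $S=A'[T,U]/(H_1,\dots,H_r,\,U\Delta(\bar y+aT)-1)$: the element you inverted maps to a non-unit, so $S$ admits no $A'$-morphism to $V$ compatible with $B\to V$. Your closing assertion that the hypotheses ``ensure $\Delta(y^V)$ is coprime to $a$ and hence invertible'' is unjustified and generally false; tellingly, your argument never uses the hypothesis $b\mapsto a$ at all, and what it actually proves is only the easy case in which the desingularizing minor is already invertible in $V$ --- precisely the difficulty the lemma is meant to overcome. (A smaller gap: for $k>r$ you deduce $f_k(\bar y+aT)=0$ from $b\,f_k\in(f_1,\dots,f_r)$, which requires $b=N\Delta$, not merely $\Delta$, to be invertible in $S$.)

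Note also that the paper does not reprove this lemma; it quotes it from \cite[Lemma 7]{P}, which extends \cite[Proposition 3]{KPP} and \cite[Proposition 5]{P1}, and the Newton step there is genuinely different from yours. With $d:=b(y')\equiv a \bmod a^3A'$, the correction of $y'$ is scaled by $d$ (not by $a$ alone) through the adjugate identity $H(y')G(y')=\Delta(y')I_r$, together with terms lying in the kernel of $H(y')$; because $f(y')\equiv 0$ and $d\equiv a$ modulo $a^3$, the resulting equations can be divided by $a^2$ --- this is where the datum $b\mapsto a$ and the exponent $3$ are actually consumed --- and after division the Jacobian of the new system is congruent to a unit modulo $a$. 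The element one must then invert maps to a unit of $V$ precisely because $a$ lies in every maximal ideal of $V$; that is the true role of that hypothesis, and it is how the proof avoids ever inverting $\Delta$ itself. Without an argument of this shape --- using $b\mapsto a$ to trade the non-unit $\Delta(y)$ for something congruent to a unit modulo $a$ --- the statement cannot be reached along the route you propose.
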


In fact the separability condition is not necessary in Corollary \ref{C'}.

\begin{Corollary}\label{co1} Let $V$ be a one dimensional valuation ring of characteristic $p>0$  with a free (over $\bf Z$) value group $\Gamma$ of rank $r$ and fraction field $K$. Assume  $V$ contains its residue field $k$ and  $x=(x_1,\ldots,x_r)\in V^r$ is a system of elements such that  $\Gamma=\oplus_{i=1}^r {\bf Z}\val(x_i)$ and $K/k(x)$ is algebraic. 
Then $V$ is a filtered union of its smooth $k$-subalgebras.
\end{Corollary}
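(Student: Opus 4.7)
My plan is to run the two-step reduction from Corollary \ref{C'} but to invoke Lemma \ref{k'} (General Néron Desingularization) in place of the Néron–Schappacher theorem, whose use in Proposition \ref{p2} required separability of $K/k(x)$. First I would set $W = V \cap k(x)$; by Corollary \ref{ku} this valuation ring is defectless, and as in the proof of Corollary \ref{C'} (via \cite[Theorem 1, VI, (10.3)]{Bou} and \cite[Lemma 4.6]{Po}) $W$ itself is a filtered union of its smooth $k$-subalgebras. It therefore suffices to show that $V$ is a filtered union of its smooth $W$-subalgebras and then to compose the two filtrations, using that a finitely presented smooth $W$-algebra descends to a smooth algebra over some sufficiently large $W$-level stage.

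Next I would observe that $W \subset V$ is an immediate extension of one-dimensional valuation rings whose fraction field extension $k(x) \subset K$ is algebraic but possibly inseparable. Because $W$ is defectless (not merely separably defectless), the second clause of Lemma \ref{l0} applies and yields that $W \subset V$ is dense. This recovers exactly the density input that Proposition \ref{p2} drew from Lemma \ref{l0} in the separable case, now without any separability hypothesis.

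With density in hand I would deploy Lemma \ref{k'} along the standard General Néron Desingularization pattern: given any finitely generated $W$-subalgebra $A$ of $V$, present $A = B/I$ with $B = W[Y_1,\dots,Y_n]$ and $I$ finitely generated, pick a nonzero $a$ in the maximal ideal of $W$, and use density to approximate the images in $V$ of the $Y_i$ by elements of $W$ modulo $a^3V$. With $A' = W$ the diagram of Lemma \ref{k'} then commutes, and the lemma produces a smooth $W$-algebra $S \subset V$ containing $A$.

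The main obstacle I would expect is exhibiting a standard element $b \in B$ over $W$ as required by Lemma \ref{k'}; in the proof of Proposition \ref{p2} such a $b$ was supplied automatically by the derivative of a separable minimal polynomial, whereas in the inseparable case one must enlarge the presentation of $A$ by adjoining sufficiently many further elements of $V$ and use the density approximation to control the valuation of a suitable Jacobian minor so that it lands in $H_{B/W}$ and maps to a nonzerodivisor in $V$. The freedom to approximate modulo arbitrarily high powers of $a$ makes this arrangement possible, after which Lemma \ref{k'} closes the argument.
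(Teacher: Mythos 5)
Your first two steps agree with the paper: $W=V\cap k(x)$ is defectless by Corollary \ref{ku}, hence the algebraic extension $W\subset V$ is dense by the second clause of Lemma \ref{l0}, and $W$ is a filtered union of (localizations of) polynomial $k$-subalgebras. The gap is exactly at the point you flag as ``the main obstacle'': producing the standard element needed for Lemma \ref{k'}. You propose to take a finitely generated $W$-subalgebra $A\subset V$, enlarge it by adjoining further elements of $V$, and use the density approximation to force a suitable Jacobian minor into $H_{A/W}$ with nonzerodivisor image. This cannot succeed in the inseparable case, which is the only new case this corollary adds over Corollary \ref{C'}. If the fraction field $L$ of $A$ is inseparable over $\Frac(W)=k(x)$, then the generic fibre $A\otimes_W k(x)=L$ is a finite inseparable extension of $k(x)$, hence not smooth over $k(x)$; so the non-smooth locus of $A$ over $W$ is closed and contains the generic point, i.e.\ it is all of $\Spec A$, and since $A$ is a domain this forces $H_{A/W}=0$. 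Enlarging $A$ inside $V$ only enlarges $L$, which remains inseparable over $k(x)$, so no choice of extra generators and no approximation modulo high powers of $a$ can produce a nonzero element of $H_{A/W}$: the obstruction is geometric (generic non-smoothness over $W$), not a matter of controlling valuations.

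The paper's proof gets around this by exploiting the asymmetry in Lemma \ref{k'} between the ring over which $b$ is standard and the ring over which the smooth algebra is produced. One starts from a finitely generated $F$-subalgebra $E\subset V$ with $F$ a perfect field contained in $V$ (so $K/F$ is separable and $w(H_{E/F})\neq 0$), chooses $z\in W$, $z\neq 0$, with $w(H_{E/F})V\supset zV$ and, after the reduction of \cite[Lemma 4]{KPP}, standard over $F$; then Lemma \ref{k'} is applied with its $A=F$, $B=E$, $A'=W$, $a=z$, the hypothesis diagram being supplied by the density of $W\subset V$, which gives $W/z^3W\cong V/z^3V$ and hence the factorization of $E\to V/z^3V$ through $W/z^3W$. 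The lemma then factors $E\to V$ through a smooth $W$-algebra, and since $W$ is a filtered union of smooth $k$-subalgebras one concludes with \cite[Lemma 1.5]{S} (and, for the statement about a filtered union rather than a limit, a variant of the N\'eron--Schappacher argument as in Proposition \ref{p2}). In short, your outline is missing the change of base to a perfect field: standardness must be arranged over $F$, not over $W$, because over $W$ it is simply unavailable once $K/k(x)$ is inseparable.
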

\begin{proof}
As in Corollary \ref{C'} we see that $W=V\cap k(x)$ is defectless and so the algebraic extension $W\subset V$ is dense by Lemma \ref{l0}.
Let $E\subset V$ be a finitely generated $F$-subalgebra and $w:E\to V$ its inclusion. Assume $E=F[Y]/I$, for $Y=(Y_1,\ldots,Y_n)$.   Using \cite[Lemma 1.5]{S} it is enough to show that $w$ factors through a smooth $F$-algebra. Note that $K/F$ is separable because $F$ is perfect. Thus $E/F$ is separable and
 $w(H_{E/F})\not =0$, let us assume that  $ w( H_{E/F})V \supset zV$ for some $z \in W $, $z\not =0$. Replacing $z$ by a power of it we may assume that $z=\sum_i^s b_ib'_i$ for some $b_i=\det(\partial f_{ij}/\partial Y_{j_i})$ for some systems of polynomials $f_i$ from $I$ and $b''_i\in F[Y]$ which kills $I/(f_i)$.
Similarly as in  \cite[Lemma 4]{KPP} we may
assume that we can take $s=1$, that is   for some polynomials $ f = (f_1,\ldots,f_r)$
from $I$, we have  $z\in NM E$ for some $N\in ((f):I)$ and a $r\times r$-minor $M$ of the Jacobian matrix $(\partial f_i/\partial Y_j)$ (since $V$ is a valuation ring this reduction is much easier). Thus we may assume $z$ is standard over $F$,  which is necessary later to apply Lemma \ref{k'}.

 Set $E'=E\otimes_FW$ and let $w':E'\to V$ be the map induced by $w$. We have $w'(H_{E'/W})\supset zV$ and note that 
$w'$ factors modulo $z^3$ through the smooth $W/z^3W$-algebra  $W/z^3W\cong V/z^3V$ since $W\subset V$ is dense. Then
 using Lemma \ref{k'} we see that $w'$ factors through a smooth $W$-algebra.  Since $W$ is a filtered direct limit of smooth $k$-algebra as in Corollary \ref{C'} we see that $w$ factors through a smooth $k$-algebra even $F$-algebra. Actually, we may apply as in Proposition \ref{p2} a variant of N\'eron-Schappacher Theorem to get $V$ as a filtered union of its smooth $W$-subalgebras and we are done.
 \hfill\ \end{proof}

\begin{Corollary}\label{co2} Let $V$ be a one dimensional valuation ring containing a perfect field $F$ of characteristic $p>0$,  $\Gamma$ its value group, $K$ its fraction field and $k$ its residue field. Assume $\Gamma$ is free of rank $r$, $k/F$ is a field extension of finite type and $\trdeg_FK=\trdeg_Fk+r$. 
Then $V$ is a filtered union of its smooth $F$-subalgebras, in particular of its smooth ${\bf F}_p$-algebras.
\end{Corollary}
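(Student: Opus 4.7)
The plan is to reduce Corollary \ref{co2} to Corollary \ref{co1} by lifting a separating transcendence basis of $k/F$ into $V$ to obtain a subring where Corollary \ref{co1} applies, enlarging by a lift of a primitive element of $k/k_0$ to produce an intermediate valuation ring $W$ that is immediate and (by defectlessness) dense in $V$, and then running the density plus N\'eron desingularization argument used in the proof of Corollary \ref{co1}.

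In detail, since $F$ is perfect and $k/F$ is finitely generated, choose a separating transcendence basis $t_1,\ldots,t_s$ of $k/F$, so $k/k_0$ is finite separable of degree $d$ with $k_0 := F(t)$, and fix a primitive element $\bar\alpha \in k$, $k = k_0(\bar\alpha)$. Lift each $t_i$ to a unit $\tilde t_i \in V$ and set $F_0 := F(\tilde t) \subset V$, mapped isomorphically onto $k_0$. Pick $x_1,\ldots,x_r \in V$ with $\val(x_i) > 0$ a ${\bf Z}$-basis of $\Gamma$; from $\trdeg_F K = s+r$, the tuple $(\tilde t, x)$ is algebraically independent over $F$ and $K/M$ is algebraic for $M := F_0(x)$. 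Set $V_0 := V \cap M$. Then $V_0$ contains the lift $F_0 \cong k_0$ of its residue field and has value group $\Gamma$, so Corollary \ref{co1} presents $V_0$ as a filtered union of smooth $F_0$-subalgebras, hence of smooth $F$-subalgebras (as $F_0/F$ is purely transcendental); Corollary \ref{ku} gives $V_0$ defectless. Now lift $\bar\alpha$ to any $\tilde\alpha \in V$; since $K/M$ is algebraic, $\tilde\alpha$ is algebraic over $M$. Let $N := M(\tilde\alpha)$ and $W := V \cap N$. Then $W$ has residue field $k$ and value group $\Gamma$, so $W \subset V$ is immediate. By defectlessness of $V_0$, the integral closure $B'$ of $V_0$ in $N$ is a free $V_0$-module of rank $[N:M]$, and $W = B'_\pp$ for the prime $\pp := \mm_V \cap B'$. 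At $\pp$ the ramification index is $1$ (same value group) and the residue field extension is $k/k_0$, separable of degree $d$, so the local factor of $B'/\mm_0 B'$ at $\pp$ is the field $k$. Hence $V_0 \to W$ is essentially étale at $\pp$; descending the finite étale presentation of $B'$ at $\pp$ to an appropriate smooth $F$-subalgebra of $V_0$ and then passing to the corresponding localization yields $W$ as a filtered union of smooth $F$-subalgebras. Moreover, defectlessness of $W$ follows by transitivity of $\sum ef$ through the tower $V_0 \subset W \subset \text{(finite } L/N\text{)}$, so by Lemma \ref{l0} the immediate algebraic extension $W \subset V$ is dense.

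The remainder follows the proof of Corollary \ref{co1}: for any finitely generated $F$-subalgebra $E \subset V$ with inclusion $w : E \to V$, the separability of $K/F$ (since $F$ is perfect) yields $w(H_{E/F}) \neq 0$, so one picks a standard element $z \in W$ with $zV \supset w(H_{E/F})V$. Base change $E' := E \otimes_F W$ has induced map $w' : E' \to V$, which factors modulo $z^3$ through $W/z^3 W \cong V/z^3 V$ by density; Lemma \ref{k'} produces a factorization through a smooth $W$-algebra, and composing with the filtered-union decomposition of $W$ yields a smooth $F$-algebra factorization. Therefore $V$ is a filtered union of smooth $F$-subalgebras, in particular of smooth ${\bf F}_p$-subalgebras. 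The key technical obstacle is verifying the étaleness of $V_0 \to W$ at $\pp$ with an arbitrary (not necessarily Hensel-lifted) choice of $\tilde\alpha$: even if the reduction of the minimal polynomial of $\tilde\alpha$ over $M$ has repeated factors at primes of $B'$ other than $\pp$, at the specific prime $\pp$ corresponding to $W$ the local fiber is precisely the field $k$, which together with defectlessness of $V_0$ makes the localization $W = B'_\pp$ essentially étale over $V_0$ and hence ind-smooth over $F$.
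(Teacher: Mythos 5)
Your proof is correct in substance, but at the key reduction it takes a genuinely different route from the paper. The paper first uses Lemma \ref{k'} to replace $V$ by its completion (as in \cite[Proposition 9]{P}), so that $V$ may be assumed Henselian; Henselianness is then used to lift the whole residue field $k$ into $V$ (the primitive element is lifted as a root of its separable minimal polynomial), after which one chooses $x$ with $\Gamma=\oplus_i{\bf Z}\val(x_i)$, notes $K/k(x)$ is algebraic, and concludes exactly as in Corollary \ref{co1} with $W=V\cap k(x)$. You avoid the completion/Henselization step altogether: you lift only a separating transcendence basis (no Hensel lemma needed), take an \emph{arbitrary} lift $\tilde\alpha$ of the primitive element, and manufacture the intermediate valuation ring $W=V\cap F(\tilde t,x,\tilde\alpha)$, showing it is essentially \'etale over the Gauss valuation ring $V_0=V\cap F(\tilde t,x)$; here defectlessness of $V_0$ (Corollary \ref{ku} together with Theorem \ref{b}) is exactly what makes the integral closure $B'$ finite free, and equality of value groups gives $\mm_0 W=\mm_W$ with finite separable residue extension $k/k_0$, so $W=B'_{\pp}$ is indeed a localization of an \'etale $V_0$-algebra even though $[N:M]$ may exceed $[k:k_0]$; descending along $V_0=\bigcup A_i$ exhibits $W$ as a filtered colimit of smooth $F$-algebras, and your transitivity argument for $\sum ef$ correctly gives $W$ defectless, so Lemma \ref{l0} yields density of $W\subset V$, after which the Lemma \ref{k'} argument from the proof of Corollary \ref{co1} applies verbatim. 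What each approach buys: the paper's is shorter, outsourcing the work to the Henselian lifting of $k$ and the completion reduction (where some care is needed, since completing changes $K$ and the transcendence-degree hypothesis, so what is really used is a Henselization-type reduction, algebraic over $K$); yours never leaves $K$, so the hypothesis $\trdeg_FK=\trdeg_Fk+r$ is undisturbed, at the price of the \'etale/defect bookkeeping and of the (standard, and correctly sketched) fact that defectlessness passes to finite extensions. One small slip: the inclusion when you choose the standard element should read $w(H_{E/F})V\supset zV$, i.e.\ $z$ lies in the ideal $w(H_{E/F})V$, not the reverse, exactly as in the proof of Corollary \ref{co1}.
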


\begin{proof} Let $E\subset V$ be a finitely generated $F$-subalgebra and $w:E\to V$ its inclusion. Using \cite[Lemma 1.5]{S} it is enough to show that $w$ factors through a smooth $F$-algebra. Using Lemma \ref{k'}  we may replace $V$ by its completion as in \cite[Proposition 9]{P}. So we may assume $V$ is Henselian.  A lifting of $k$ to $V$ could be done when $V$ is Henselian and char\ $k=p=0$ (see \cite[Theorem 2.9]{v}) but the proof goes in the same way when $p>0$ and  $k$ is separably generated over $F$. In particular the lifting could be done when $k/F$ is of finite type because $F$ is perfect. Thus we may assume $k\subset V$ and $\trdeg_kK=r$. Choose some elements  $x=(x_1,\ldots,x_r)\in V^r$  such that  $\Gamma=\oplus_{i=1}^r {\bf Z}\val(x_i)$. Then $W=V\cap k(x)\subset V$ is algebraic and we may proceed as in Corollary \ref{co1}.  
 \hfill\ \end{proof}

 \section{Algebraic pseudo convergent sequences}

 Let  $ V'$ be an  immediate extension  of a valuation ring $V$, $K\subset K'$ their fraction field extension, $v= (v_j)_{j<\lambda}$  a pseudo-convergent sequence in $V$ which is not fundamental and has  a pseudo limit $x$ in  $ V'$, but having no pseudo limit in $K$.  Suppose that  $K'=K(x)$.  We need the following result \cite[Theorem 1.8]{KC}.
 
 \begin{Theorem}(Kuhlmann-\' Cmiel)\label{kc}  Assume that $x$ is not a root of an irreducible polynomial $f\in K[X]$ and either  $\dim V=1$, or $V$ is Henselian. Then 
for all $z\in V$ there is a  $\nu<\lambda$ such that $\val(f(z))<\val(f(v_j))$ for all $\nu<j<\lambda$.
 \end{Theorem}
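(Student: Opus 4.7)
\medskip

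\noindent\textbf{Proof proposal.}
The plan is to reduce to the case $V$ Henselian and then combine Kaplansky's classification of pseudo-convergent sequences with a Hensel-Krasner analysis of $f$.

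First I would reduce to $V$ Henselian. When $\dim V = 1$, the Henselization $V \subset V^h$ is an immediate dense extension. The sequence $v$ remains pseudo-convergent in $V^h$ and acquires no new pseudo-limit there: since $v$ is not fundamental, the values $\gamma_j := \val(v_j - v_{j+1})$ are bounded by some $\delta \in \Gamma$, so any hypothetical pseudo-limit $y \in K^h$ could be approximated by some $z \in K$ with $\val(y-z) > \delta$, and this $z$ would then itself be a pseudo-limit of $v$ in $K$, contradicting the hypothesis. Passing to a common valued extension of $V^h$ and $V'$ (e.g.\ inside an algebraic closure of $K$), the pseudo-limit $x$ persists, $f(x) \ne 0$ still holds, and the inequality $\val(f(z)) < \val(f(v_j))$ lies in the common value group and so descends from $V^h$ back to $V$.

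Assuming next that $V$ is Henselian, I would apply Kaplansky's dichotomy for $v$. If $v$ is algebraic, let $g \in K[X]$ be its associated minimal polynomial, i.e.\ the polynomial of minimal degree (up to scalar) with $\val(g(v_j))$ eventually strictly increasing. By Kaplansky's theorem, $g$ is irreducible, and every algebraic pseudo-limit of $v$ in an immediate extension has $g$ as its minimal polynomial. Combined with $K' = K(x)$, immediacy of $V \subset V'$, and the no-pseudo-limit-in-$K$ assumption, this forces $x$ to be algebraic over $K$ with minimal polynomial $g$. Since $f(x) \ne 0$ and $f$ is irreducible, $f$ is not associate to $g$; writing $f = qg + r$ with $\deg r < \deg g$ and combining the strict increase of $\val(g(v_j))$ with Kaplansky's eventually-constant-valuation property for polynomials of degree $<\deg g$, one obtains $\val(f(v_j)) = \val(r(v_j))$ eventually constant at $\val(r(x)) = \val(f(x))$. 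If instead $v$ is transcendental, $\val(f(v_j))$ is directly eventually constant at $\val(f(x))$ via a Taylor expansion around $x$.

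The crux is then to establish $\val(f(x)) > \val(f(z))$ for every $z \in V$. Suppose for contradiction that $\val(f(z_0)) \ge \val(f(x))$ for some $z_0 \in V$. For $f$ irreducible of degree $\ge 2$ with no root in $K$, Krasner's lemma gives a uniform bound $\val(w - z) \le M_w$ for any root $w$ of $f$ in an algebraic closure of $K$ and any $z \in V$, so $\val(f(z))$ is bounded on $V$. A Hensel refinement of $z_0$ in the Henselian ring $V$, organised so as to follow the pseudo-convergent pattern of $v$, would produce an element $y \in V$ with $\val(y - v_j) = \gamma_j$ for $j$ large, i.e.\ a pseudo-limit of $v$ inside $V$, contradicting the hypothesis. (The case $\deg f = 1$, if admitted at all, reduces to a direct comparison of $\val(v_j - w)$ and $\val(z - w)$ where $w$ is the root of $f$.)

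The main obstacle is the last step: converting the assumed inequality $\val(f(z_0)) \ge \val(f(x))$ into a genuine Hensel refinement that produces a pseudo-limit of $v$ inside $V$. This requires using the minimal polynomial $g$ to organise the behaviour of the unrelated polynomial $f$, in the spirit of the key-polynomial and MacLane-Vaqui\'e theory of valuations on $K[X]$. The dim-one or Henselian hypothesis is indispensable both for the Hensel iterations to converge and for the final inequality to descend back to the original $V$.
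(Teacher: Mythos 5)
Note first that the paper itself gives no proof of this statement: it is quoted verbatim from Kuhlmann--\'Cmiel \cite[Theorem 1.8]{KC}, so your attempt has to stand on its own merits, and it does not.

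The central gap is your claim that, when $v$ is algebraic with associated minimal polynomial $g$, the hypotheses ($K'=K(x)$ immediate over $K$, no pseudo limit of $v$ in $K$) force $x$ to be algebraic over $K$ with minimal polynomial $g$. Kaplansky's theorem \cite[Theorem 3]{Kap} only produces \emph{one} immediate extension generated by a root of $g$ that is a pseudo limit of $v$; it does not say that every pseudo limit of $v$ in every immediate extension is a root of $g$. Transcendental pseudo limits of algebraic pseudo convergent sequences do occur (add to an algebraic pseudo limit a transcendental element of sufficiently high value inside a larger immediate extension), and this is exactly the configuration in which the present paper uses Theorem \ref{kc}: in the proof of Theorem \ref{ky}, $x$ is transcendental over $K$, $v$ is assumed algebraic, and $f=h$ is the polynomial of minimal degree with $\val(h(v_i))$ eventually strictly increasing. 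Your case analysis discards precisely this case (``$f$ is not associate to $g$''), i.e.\ it omits the only case in which the theorem has content. Worse, in the two cases you do treat you conclude that $\val(f(v_j))$ is eventually constant, equal to $\val(f(x))$; this is incompatible with the conclusion you are trying to prove, since taking $z=v_{j_0}$ with $j_0$ beyond the point of constancy makes $\val(f(z))<\val(f(v_j))$ impossible. Indeed, since the $v_j$ lie in $V$, the asserted conclusion forces $\val(f(v_j))$ to be eventually strictly increasing, so the whole substance of the statement concerns the polynomials attached to the algebraic sequence $v$ --- the case your argument never reaches.

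In addition, the step you yourself call the crux --- converting $\val(f(z_0))\ge\val(f(x))$ into a Hensel refinement that yields a pseudo limit of $v$ inside $V$ --- is only described ``in the spirit of'' key-polynomial theory, with no actual argument; but that step \emph{is} the content of \cite[Theorem 1.8]{KC} (compare \cite[Lemma 5.4]{KV}), so the proposal assumes what it must prove. The preliminary reduction to the Henselian case is also incomplete: the conclusion involves only elements of $K$, but the hypotheses have to be re-created over $K^h$ (a pseudo limit of $v$ in an immediate extension of $V^h$ avoiding the roots of $f$, while $f$ may factor over $K^h$ and the minimal degree attached to $v$ may drop), and your appeal to Krasner's lemma to bound $\val(f(z))$ on $V$ requires $f$ separable, which is not among the hypotheses.
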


\begin{Theorem} \label{ky}  Assume  $x$ is transcendental over $K$ and  either  $\dim V=1$, or $V$ is Henselian.
 Then the following statements  are equivalent:
 \begin{enumerate}
 
 \item for every  polynomial $f\in V[X]$ with $f(x)\not =0$ there exists a $y\in V$ such that $\val(f(y))= \val(f(x))$, 
\item for every  polynomial $f\in V[X]$ with $f(x)\not =0$   there exists a $y\in V$ such that $\val(f(y))\geq \val(f(x))$
 \item $v$ is transcendental,
 \item $V'$ is a filtered  union of its localizations of polynomial $V$-subalgebras in one variable.
 \end{enumerate}
 
\end{Theorem}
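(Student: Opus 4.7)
The plan is to prove the four-way equivalence via the cycle $(1)\Rightarrow (2)\Rightarrow (3)\Rightarrow (1)$, together with $(3)\Leftrightarrow (4)$. The implication $(1)\Rightarrow (2)$ is immediate.

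For $(3)\Rightarrow (1)$, I fix $f\in V[X]$ with $f(x)\neq 0$ and consider the Hasse--Taylor expansion $f(X+Y)=\sum_{k\geq 0}f^{[k]}(X)Y^k$. By transcendence of $v$ applied to $f$ and to each Hasse derivative $f^{[k]}$, both $\val(f(v_j))$ and $\val(f^{[k]}(v_j))$ are eventually constant, say equal to $\alpha_f$ and $\beta_k$ respectively. The expansion $f(v_{j+1})-f(v_j)=\sum_{k\geq 1}f^{[k]}(v_j)(v_{j+1}-v_j)^k$, combined with the strictly increasing $\val(v_{j+1}-v_j)=\val(x-v_j)$, forces the $k=1$ term to dominate for large $j$, giving $\val(f(v_{j+1})-f(v_j))=\val(v_{j+1}-v_j)+\beta_1$. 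Comparing with $\val(f(v_j))=\val(f(v_{j+1}))=\alpha_f$ (so the difference has valuation strictly greater than $\alpha_f$) yields $\val(x-v_j)+\beta_1>\alpha_f$ for large $j$. The analogous expansion with $x$ in place of $v_{j+1}$ then gives $\val(f(x)-f(v_j))>\alpha_f$, hence $\val(f(x))=\val(f(v_j))=\alpha_f$, so $y=v_j$ witnesses (1) for $j$ large enough.

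For $(2)\Rightarrow (3)$ I argue by contraposition. Suppose $v$ is algebraic and pick $f\in V[X]$ of minimal degree among witnesses, so $\val(f(v_j))<\val(f(v_{j'}))$ for large $j<j'<\lambda$. Minimality of $\deg f$ forces each $f^{[k]}$ with $k\geq 1$ to be a non-witness, so $\val(f^{[k]}(v_j))$ is eventually constant equal to some $\beta_k$. A Hasse--Taylor analysis parallel to the one above yields $\val(f(x))\geq\val(f(v_j))$ for all large $j$ (the higher-order terms $\beta_k+k\val(x-v_j)$ clear the bound $\val(f(v_j))$ because $\val(f(v_j))$ itself grows, so the inequality is eventually automatic for $k\geq 2$). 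By Theorem~\ref{kc}, applicable since $x$ is transcendental over $K$ (hence not a root of any irreducible polynomial in $K[X]$), for every $z\in V$ one has $\val(f(z))<\val(f(v_j))$ for all sufficiently large $j$. Combining, $\val(f(z))<\val(f(x))$ for every $z\in V$, contradicting (2).

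The equivalence $(3)\Leftrightarrow (4)$ is the content of \cite[Lemma 3.2]{Po} (equivalently \cite[Lemma 15]{P}), already invoked in the proof of Proposition~\ref{p0}: transcendence of $v$ characterizes precisely when $V'$ is a filtered union of localizations of polynomial $V$-subalgebras in one variable. The main obstacle is the Hasse--Taylor argument in $(2)\Rightarrow (3)$, particularly verifying that minimality of $f$ controls the higher-order terms tightly enough to establish $\val(f(x))\geq\val(f(v_j))$; once this inequality is secured, Theorem~\ref{kc} closes the argument immediately.
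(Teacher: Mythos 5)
There is a genuine gap at statement (4): you never actually prove an implication \emph{out of} (4). You assert that $(3)\Leftrightarrow(4)$ is the content of \cite[Lemma 3.2]{Po} (or \cite[Lemma 15]{P}), but that lemma, exactly as it is used in Proposition~\ref{p0} and in the paper's proof of Theorem~\ref{ky}, gives only the direction ``$v$ transcendental $\Rightarrow$ $V'$ is a filtered union of localizations of polynomial $V$-subalgebras'', i.e.\ $(3)\Rightarrow(4)$; the converse is not a citation but the point that needs an argument, and without it your scheme $(1)\Rightarrow(2)\Rightarrow(3)\Rightarrow(1)$ plus $(3)\Rightarrow(4)$ only shows that (4) follows from the others, not that it implies them. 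The paper closes the loop by proving $(4)\Rightarrow(1)$ directly: write $f(x)=dz$ with $zz'=1$ in $V'$, so $x,z,z'$ solve $F_1=f-dZ$, $F_2=ZZ'-1$ over $V$; by (4) this solution lies in some $C=V[u']_{P(u')}$ with $u'$ transcendental over $V$, and since the extension is immediate one may choose $u\in V$ with $u\equiv u'$ modulo $\mm'$, making $P(u)$ a unit, so $u'\mapsto u$ defines a retraction $\rho\colon C\to V$ and $y=\rho(x)$ satisfies $\val(f(y))=\val(d)=\val(f(x))$. Your proposal contains no substitute for this retraction argument.

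Two smaller points. In $(2)\Rightarrow(3)$ you apply Theorem~\ref{kc} to your minimal-degree witness $f$, but that theorem is stated for an \emph{irreducible} polynomial; as in the paper you must check (via the standard argument, cf.\ \cite{Sch}) that a minimal-degree witness is irreducible. In $(3)\Rightarrow(1)$ the claim that the $k=1$ term of the Hasse--Taylor expansion dominates is unjustified (the eventually dominating term is $\beta_h+h\,\val(x-v_j)$ for some fixed $h\ge 1$, not necessarily $h=1$), and from $\val(f(v_j))=\val(f(v_{j+1}))=\alpha_f$ you may only conclude that the difference has valuation $\ge\alpha_f$, not $>\alpha_f$; the correct route (Kaplansky) is that $\val(f(v_{j+1})-f(v_j))$ eventually equals $\beta_h+h\,\val(v_{j+1}-v_j)$, which is strictly increasing and hence eventually exceeds $\alpha_f$, after which $\val(f(x))=\alpha_f$ follows. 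These defects are repairable, and your $(3)\Rightarrow(1)$ (which the paper obtains instead from (4)) is a legitimate alternative once written carefully, but the missing implication from (4) back into the cycle is a real hole.
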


\begin{proof} First assume that (4) holds and let $f$ be as in (1).  Let $d\in V$, $z\in V'$ be such that $f(x)=dz$ and $zz'=1$ for some $z'\in V'$. Then the solution $x,z,z'$ in $V'$ of the polynomials $F_1=f-dZ, F_2=ZZ'-1 \in V[X,Z,Z']$ must be contained in a localization of a polynomial $V$-subalgebra $C=V[u']_{P(u')}$ of $V'$, where $P\in V[U]$ and  $u'\in V'$ is  transcendental over $V$. Choose a $u\in V$ such that $u\equiv u'$ modulo $\mm'$ the maximal ideal of $V'$. Then $P(u)$ is a unit in $V$ and the map $\rho:C\to V$ given by $u'\to u$ is a retraction of $V\subset C$ and $y=\rho(x), \rho(z), \rho(z')$ is a solution of $F_1,F_2$ in $V$, and so $\val(f(y))=\val(d)=\val(f(x))$.

Clearly (2) follows from (1) and assume that (2) holds but $v$ is algebraic. This is not possible as F.\ V.\ Kuhlmann said, the proof being done in Theorem \ref{kc} (see also \cite[Lemma 5.4]{KV}). Let  $h\in V[T]$ be a polynomial with  minimal degree among the polynomials $f\in V[T]$ such that
 $\val(f(v_i))<\val(f(v_j))$ for large $i<j<\lambda$. Then $h$ is irreducible (see the proof of \cite[(II,4), Lemma 12]{Sch}). Note that $\val(h(x)-h(v_j))\geq \val(x-v_j)=\val(v_{j+1}-v_j)$. As the last term is increasing we must have $\val(h(x))\geq \val(h(v_j))$ for all $j$.   By Theorem \ref{kc} for every $z\in V$ we have $\val(h(z))<\val(h(v_j))$ for $j$ large. Thus 
   $h$ fails the condition (2) and so $v$ cannot be algebraic.

If $v$ is transcendental then $V'$ is  a filtered  union of its localizations of polynomial $V$-subalgebras in one variable by \cite[Lemma 3.2]{Po} (see also \cite[Lemma 15]{P}).
\hfill\ \end{proof}

 \begin{Corollary} In the assumptions of the above theorem, if $v$ is algebraic  then $V'$ is not  a filtered  union of its localizations of polynomial $V$-subalgebras in one variable.
 \end{Corollary}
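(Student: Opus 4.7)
My plan is to obtain this statement as an immediate contrapositive of the equivalence $(3) \Leftrightarrow (4)$ proved in Theorem \ref{ky}. The hypotheses carried over from the preceding theorem, namely that $V \subset V'$ is an immediate extension of valuation rings, $x \in V'$ is transcendental over $K$ with $K'=K(x)$, the sequence $v$ is pseudo-convergent with pseudo limit $x$ in $V'$ but no pseudo limit in $K$, and either $\dim V = 1$ or $V$ is Henselian, make no a priori commitment about whether $v$ is algebraic or transcendental. So the full equivalence of statements (1)--(4) there is available without circularity.

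Suppose for contradiction that $V'$ \emph{is} a filtered union of localizations of its polynomial $V$-subalgebras in one variable. Then statement (4) of Theorem \ref{ky} holds, so by the equivalence, statement (3) holds, i.e.\ $v$ is transcendental. This directly contradicts the hypothesis that $v$ is algebraic, and the corollary follows.

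There is essentially no obstacle here; the only thing to double-check is that, under the standing hypotheses, the notions of being algebraic and not being transcendental are complementary for a pseudo-convergent sequence (which is built into the definitions recalled in the paper), and that the theorem's equivalence was proved without presupposing the algebraic/transcendental character of $v$. Both points are clear from the setup, so the proof is simply a one-line appeal to Theorem \ref{ky}.
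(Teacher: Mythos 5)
Your proof is correct and is exactly the intended argument: the paper states this as an immediate consequence of Theorem \ref{ky}, with no separate proof, and the contrapositive of the implication (4) $\Rightarrow$ (3) (together with the evident fact that an algebraic pseudo convergent sequence cannot be transcendental) is all that is needed.
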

 
 \begin{Lemma} \label{r2}  In the assumptions of the above theorem, assume that $V$ is Henselian and  there exists an immediate extension of valuation rings $V'\subset V_1$ such that $V_1$ is a filtered direct limit of smooth $V$-algebras. Then   for every  polynomial $f\in V[X]$ such that $f(x)\not =0$   there exists a $y\in V$ such that $\val(f(y))= \val(f(x))$.
\end{Lemma}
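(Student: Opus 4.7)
The plan is to encode the relation $f(x)=au$ (with $u$ forced to be a unit) into a finitely presented $V$-algebra $E$, use the colimit hypothesis to factor its $V_1$-point through a smooth $V$-algebra, and then apply Henselianity of $V$ to obtain a $V$-point.

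First, since $V\subset V_1$ is immediate, the value group is preserved, so there exists $a\in V$ with $\val(a)=\val(f(x))$; writing $f(x)=au$, the element $u$ lies in $V_1^*$. Set
\[
E \;=\; V[X,Z,W]/(f(X)-aZ,\; ZW-1),
\]
and let $\phi\colon E\to V_1$ be the $V$-algebra map sending $X\mapsto x$, $Z\mapsto u$, $W\mapsto u^{-1}$. The auxiliary variable $W$ together with the relation $ZW=1$ is essential: it guarantees that any $V$-point of $E$ evaluates $Z$ to a unit of $V$, which is exactly what promotes the weak conclusion $f(y)\in aV$ to the sharp equality $\val(f(y))=\val(a)$.

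Next, because $E$ is finitely presented over $V$ and $V_1=\dirlim_\alpha S_\alpha$ is a filtered direct limit of smooth $V$-algebras, the map $\phi$ factors as $E\xrightarrow{\psi} S\xrightarrow{\rho} V_1$ for some smooth $V$-algebra $S$ in the system. The extension $V\subset V_1$ being immediate, the residue field of $V_1$ is $k=V/\mm$, so composing $\rho$ with $V_1\surjects k$ gives a $k$-point of the smooth $V$-scheme $\Spec S$. The Henselian property of $V$ lifts this $k$-point to a section $\sigma\colon S\to V$. Setting $y=\sigma(\psi(X))\in V$, $z_0=\sigma(\psi(Z))$, $w_0=\sigma(\psi(W))$, the defining relations of $E$ force $f(y)=az_0$ and $z_0w_0=1$, so $z_0\in V^*$ and $\val(f(y))=\val(a)=\val(f(x))$, as required.

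The main obstacle is organizational rather than technical: one must build in the formal inverse $W$ so that the Henselian lift produces a unit at the $k$-point of the smooth $V$-algebra $S$, and one must invoke the standard fact that a finitely presented $V$-algebra mapping into a filtered colimit of smooth $V$-algebras already maps through one of the smooth members of the colimit. Notice that the pseudo-convergence data $v$ and the transcendence of $x$ play no role beyond ensuring $f(x)\neq 0$; the argument uses only the Henselianity of $V$, the smoothness in the colimit, and the fact that the extension $V\subset V_1$ is immediate.
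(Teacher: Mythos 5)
Your proposal is correct and follows essentially the same route as the paper: encode $f(x)=d z$ with $z$ a forced unit via the system $f-dZ$, $ZZ'-1$, factor the resulting map into $V_1$ through a smooth $V$-algebra using the filtered colimit, and retract to $V$ by Henselianity. Your write-up is in fact slightly more careful than the paper at the lifting step, noting explicitly that the needed $k$-point of the smooth algebra comes from the residue field of $V_1$ being $k$ because the extension is immediate.
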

\begin{proof}  Fix $f\in V[X]$. Let $d\in V$, $z\in V'$ be such that $f(x)=dz$ and $zz'=1$ for some $z'\in V'$. Since $V_1$ is a filtered direct limit of smooth $V$-algebras, the solution $x,z',z'$ in $V'$ of the polynomials $F_1=f-dZ, F_2=ZZ'-1 \in V[X,Z,Z']$ must  come from  a solution $\bar x$, $\bar z$, ${\bar z}'$ in a smooth $V$-algebra $C$. But $V$ is Henselian and so there exists  a retraction  $\rho:C\to V$ of $V\subset C$. Thus $y=\rho({\bar x}), \rho({\bar z}), \rho({\bar z}')$ is a solution of $F_1,F_2$ in $V$, and so $\val(f(y))=\val(d)=\val(f(x))$.
\hfill\ \end{proof}

\begin{Lemma} \label{r3}  In the assumptions of the above theorem, assume that $\dim V=1$ and  there exists an immediate extension of valuation rings $V'\subset V_1$ such that $V_1$ is a filtered direct limit of smooth $V$-algebras. Let  $\hat V$  be the completion of $V$ and $K$, $\hat K$ the fraction fields of $V$,  $\hat V$. Suppose that ${\hat K}/K$ is separable.  Then   for every  polynomial $f\in V[X]$ with $f(x)\not=0$ there exists a $y\in V$ such that $\val(f(y))= \val(f(x))$, 
\end{Lemma}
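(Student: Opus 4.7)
The plan is to transport the Henselian-retraction argument of Lemma \ref{r2} to the completion $\hat V$---which is Henselian because $\dim V=1$---and then descend back to $V$ by density.

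I would first set up the equations exactly as in Lemma \ref{r2}. Since $V\subset V'$ is immediate, pick $d\in V$ with $\val(d)=\val(f(x))$ and let $z=f(x)/d$, so that $z\in V'$ is a unit and $(x,z,z^{-1})$ is a common zero in $V'\subset V_1$ of $F_1=f-dZ$ and $F_2=ZZ'-1$ in $V[X,Z,Z']$. Since $V_1$ is a filtered direct limit of smooth $V$-algebras, this solution factors through a solution $(\bar x,\bar z,\bar z')$ in some smooth $V$-algebra $C$ equipped with a $V$-algebra map $\phi\colon C\to V_1$ carrying $(\bar x,\bar z,\bar z')$ to $(x,z,z^{-1})$.

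Next, I would base-change along $V\to\hat V$: the $\hat V$-algebra $B:=C\otimes_V\hat V$ is smooth over $\hat V$, and the composition $C\xrightarrow{\phi}V_1\to V_1/\mm_{V_1}=k$ descends to a $k$-point of the closed fibre $B\otimes_{\hat V}k=C\otimes_V k$ (using that $V$, $V_1$, and $\hat V$ share the residue field $k$). Because $\hat V$ is Henselian and $B/\hat V$ is smooth, the standard lifting property of smooth morphisms over a Henselian local base produces a $\hat V$-algebra retraction $\rho\colon B\to\hat V$. Writing $\hat y=\rho(\bar x\otimes 1)$, $\hat z=\rho(\bar z\otimes 1)$, $\hat z'=\rho(\bar z'\otimes 1)$, the identities $F_1(\bar x,\bar z)=F_2(\bar z,\bar z')=0$ transfer to $f(\hat y)=d\hat z$ with $\hat z\hat z'=1$, so that $\hat z$ is a unit of $\hat V$ and $\val(f(\hat y))=\val(d)=\val(f(x))$.

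Finally, I would descend from $\hat V$ to $V$ using that $V$ is dense in $\hat V$ for the valuation topology. Picking $y\in V$ with $\val(y-\hat y)>\val(d)$, the divided-difference identity $f(y)-f(\hat y)=(y-\hat y)g(y,\hat y)$ with $g\in V[Y,Y']$ yields $\val(f(y)-f(\hat y))\geq \val(y-\hat y)>\val(d)=\val(f(\hat y))$, and the ultrametric inequality then forces $\val(f(y))=\val(f(\hat y))=\val(f(x))$, as required. The main obstacle I expect is the construction of $\rho$: one must ensure that the base change $B=C\otimes_V\hat V$ carries the residue data from $\phi$ in a valuation-compatible way, and this is where the hypothesis that $\hat K/K$ be separable is expected to enter, via reducedness of $V_1\otimes_V\hat V$---so that the inseparable part of $\Frac V_1$ over $K$ does not corrupt the $k$-point after base change---and hence that the Henselian lift $\hat y\in\hat V$ really records the valuation of $f(x)$. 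Once $\rho$ is available, the valuation computation and the density approximation are routine.
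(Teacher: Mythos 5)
Your argument is correct, but it takes a genuinely different (and in fact shorter) route than the paper's. The paper proves the lemma by reducing to Lemma \ref{r2} applied over $\hat V$: it passes to the completion $\hat V_1$ of $V_1$, sets $K_2=\hat K(V_1)$ and $V_2=\hat V_1\cap K_2$, and uses the separability of $\hat K/K$ to argue that $K_2$ is a localization of $\hat K\otimes_K K_1$, hence that $V_2$ is a localization of $\hat V\otimes_V V_1$ and so again a filtered direct limit of smooth $\hat V$-algebras; only then does the Henselian retraction argument apply, followed by the same density/ultrametric approximation you use. You avoid constructing any auxiliary valuation ring: you factor the single solution $(x,z,z^{-1})$ of $F_1,F_2$ through one smooth $V$-algebra $C$ mapping to $V_1$, base-change only $C$ (not $V_1$) to the Henselian ring $\hat V$, and lift the $k$-point $C\to V_1\to k$ of the special fibre to an $\hat V$-point by the standard lifting property of smooth algebras over Henselian local rings. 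This is sound: smoothness is preserved by the base change $C\otimes_V\hat V$, the residue fields of $V$, $V_1$ and $\hat V$ all equal $k$ because the extensions are immediate and completion preserves the residue field, and the relations $F_1=F_2=0$ can be arranged to hold already in $C$ by filteredness. One remark: your closing worry about needing reducedness of $V_1\otimes_V\hat V$ is unfounded within your own argument --- you never tensor $V_1$ with $\hat V$, so the hypothesis that $\hat K/K$ is separable is simply not used by your proof; it is the paper's construction of $V_2$ as a filtered colimit of smooth $\hat V$-algebras (a construction reused later in the proof of Theorem \ref{t}, where the perturbed element $x+t$ with $t\in\hat V$ forces one to work with equations over $\hat V$) that requires it. So your proof buys a simpler argument and even a slightly stronger statement, while the paper's detour sets up machinery needed elsewhere.
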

\begin{proof}  Let     ${\hat V}_1$ be the completion of $V_1$ and $K_1$ the fraction field of $V_1$. Then $ {\hat V}_1\cap {\hat K}={\hat V}$. Note that  $K_2={\hat K}(V_1)$ is a localization of ${\hat K}\otimes_K K_1$ since ${\hat K}/K$ is separable. Then $V_2={\hat V}_1\cap K_2$ is  a localization of  ${\hat V}\otimes_V V_1$ and so $V_2$ is a filtered direct limit of smooth $\hat V$-algebras. As $\hat V$ is Henselian we get $\val(f(z))=\val(f(x))$ for some $z\in {\hat V}$ using the above lemma. Choose $y\in V$ such that $\val(y-z)>\val(f(x))$. Then $\val(f(y))=\val(f(x))$.
\hfill\ \end{proof}

\begin{Theorem}\label{t}  Let  $ V'$ be an  immediate algebraic  extension  of a valuation ring $V$, 
$\hat V$ the completion of $V$ and $K$, ${\hat K}$ the fraction fields of $V$, ${\hat V}$. Assume that   $\trdeg_K{\hat K}\geq 1$,  ${\hat K}/K$ is separable, there exists an immediate extension of valuation rings $V'\subset V_1$ such that $V_1$ is a filtered direct limit of smooth $V$-algebras and either $V$ is Henselian, or $\dim V=1$. Then $V\subset V'$ is dense.
\end{Theorem}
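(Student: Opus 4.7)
The plan is to argue by contradiction, using the transcendence hypothesis $\trdeg_K\hat K\geq 1$ to replace a hypothetical non-dense algebraic element of $V'$ by a transcendental companion to which Theorem \ref{ky} can be applied. If $V\subset V'$ were not dense, I would fix $x_0\in V'$ and $\gamma_0\in\Gamma$ with $\val(x_0-y)\leq\gamma_0$ for every $y\in V$; the algebraicity of $V'/V$ makes $x_0$ algebraic over $K$. By \cite[Theorem 1]{Kap}, $x_0$ is a pseudo limit of a pseudo convergent sequence $v=(v_j)_{j<\lambda}$ in $V$ with no pseudo limit in $V$, and since $\val(x_0-v_j)\leq\gamma_0$ the sequence $v$ is not fundamental.

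Next I would exploit $\trdeg_K\hat K\geq 1$ to pick $t\in\hat V$ transcendental over $K$, and after scaling by an element of large valuation I may assume $\val(t)>\gamma_0$. Since $K(x_0)/K$ is algebraic, $t$ is transcendental over $K(x_0)$, so $x:=x_0+t$ is transcendental over $K$; the ultrametric inequality gives $\val(x-v_j)=\val(x_0-v_j)$ for large $j$, so $x$ is a pseudo limit of the same sequence $v$. Following the construction in the proof of Lemma \ref{r3}, the separability of $\hat K/K$ allows me to form a suitable localization $V_1^*$ of $\hat V\otimes_V V_1$ that is a valuation ring containing both $V_1$ and $\hat V$, immediate over $\hat V$, and a filtered direct limit of smooth $\hat V$-algebras. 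Setting $V^*:=V_1^*\cap\hat K(x)$ yields $x\in V^*$ with $\hat V\subset V^*$ immediate and $V^*\subset V_1^*$ smooth-dirlim over $\hat V$.

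Since $\hat V$ is Henselian, Lemma \ref{r2} applied to the triple $(\hat V, V^*, V_1^*)$ gives, for every $f\in\hat V[X]$ with $f(x)\neq 0$, some $z\in\hat V$ with $\val(f(z))=\val(f(x))$. Restricting to $f\in V[X]$ and approximating $z$ by $y\in V$ in the valuation topology (routine for $\dim V=1$, and handled by the same device as in Lemma \ref{r3} in the Henselian case) verifies condition (1) of Theorem \ref{ky} for the transcendental pseudo limit $x$. The equivalence (1)$\Leftrightarrow$(3) in Theorem \ref{ky} then forces $v$ to be transcendental, contradicting the algebraicity of $x_0$, since a transcendental pseudo convergent sequence has only transcendental pseudo limits. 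The main obstacle I expect is the construction of $V_1^*$ and the verification that $\hat V\subset V^*$ is an immediate extension with the right smooth-dirlim structure; the hypothesis that $\hat K/K$ is separable is exactly what keeps $\hat V\otimes_V V_1$ well-behaved, and the transfer from $\hat V$ back to $V$ in the Henselian case of higher dimension requires some care about the completion topology.
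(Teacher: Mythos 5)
Your proposal is correct and follows essentially the same route as the paper's proof: assume non-density, take an algebraic $x_0\in V'$ that is a pseudo limit of a non-fundamental pseudo convergent sequence $v$ with no pseudo limit in $V$, shift by a transcendental $t\in\hat V$ of sufficiently large value so that $x_0+t$ is still a pseudo limit, build the localization of $\hat V\otimes_V V_1$ (using separability of $\hat K/K$) as in Lemma \ref{r3}, and apply Lemmas \ref{r2} and \ref{r3} together with Theorem \ref{ky} to conclude $v$ is transcendental, a contradiction. Your write-up is in fact slightly more explicit than the paper about the intermediate valuation rings and about why $v$ fails to be fundamental and why $\val(t)>\gamma_0$ suffices.
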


 \begin{proof}
 Assume that $V\subset V'$ is not  dense and let $x\in V'$ which is not in $\hat V$. Then $x$ is a pseudo limit of a pseudo convergent sequence $v$ which is not fundamental and has no pseudo limits in $V$ by \cite[Theorem 1]{Kap}. Note that $v$ is algebraic because $x$ is algebraic over $K$.  Choose a transcendental $t\in {\hat V}$ over $K$. Multiplying $t$ with a constant of $V$ of high enough value we may assume that $x+t$ is still a pseudo limit of $v$. 

 Let     ${\hat V}_1$ be the completion of $V_1$  and set $V_2={\hat V}_1\cap {\hat K}(V_1)$. As in the proof of the above lemma we see that $V_2$ is a  filtered direct limit of smooth $\hat V$-algebras. Thus  for every  polynomial $f\in V[X]$ such that $f(x)\not =0$   there exists a $y\in V$ such that $\val(f(y))= \val(f(x+z))$ using Lemmas \ref{r2}, \ref{r3}.  By Theorem \ref{ky} applied to $x+z$ we get $v$ transcendental, which is false. 
\hfill\ \end{proof}

\vskip 0.5 cm


\begin{thebibliography}{99}


\bibitem{AD} B.\ Antieau and R.\ Datta, Valuation rings are derived splinters,  {\em arxiv/AG:2002.010627v1}.



\bibitem{Bou}   N.\ Bourbaki,   \'El\'ements de math\'ematique. Alg\`ebre commutative, chap. I-VII, Hermann (1961, 1964,
1965); chap. VIII-X, Springer, (2006, 2007), (French).


\bibitem{J} A.\ J.\ de Jong, Smoothness, semi-stability and alterations, {\em Inst. Hautes Études Sci. Publ. Math.}, {\bf 83},
51–93, (1996).


\bibitem{Kap} I.\ Kaplansky,  Maximal fields with valuations, {\em Duke Math. J.}, {\bf 9}, 303-321, (1942).


\bibitem{KPP} Z.\ Kosar, G.\ Pfister, D.\ Popescu,  Constructive N\'eron Desingularization of algebras with big smooth locus,  {\em Communications in Algebra}, {\bf 46},  1902-1911, (2018), {\em  arXiv/AC:1702.01867}.

\bibitem{K1} F.\ V. \ Kuhlmann, Elimination of Ramification I: The Generalized Stability Theorem, {\em Trans. AMS}, {\bf 362},  5697-5727,  (2010), {\em arXiv/AC:1003.5678}.

\bibitem{K} F.\ V. \ Kuhlmann,  The Defect, {\em arXiv/AC:1004.2135v1}.

\bibitem{KC} F.\ V.\   Kuhlmann, H.\ \'Cmiel, Observations and conjectures on key polynomials, Preprint 2020.




\bibitem{KV} F.\ V.\   Kuhlmann, I.\ Vlahu, The relative approximation degree in valued function fields, {\em Math. Z.}, {\bf 276}, 203–235, (2014).



\bibitem{O} A.\  Ostrowski,  Untersuchungen zur arithmetischen Theorie der K\"orper, {\em Math. Z.}, {\bf 39},
321-404, (1935).


 \bibitem{Po} D.\ Popescu,  On Zariski's uniformization theorem, in Algebraic geometry, Bucharest 1982 (Bucharest,
1982), {\em Lecture Notes in Math.}, {\bf 1056}, Springer, Berlin,  264-296,  (1984).
 
 \bibitem{Po1} D.\ Popescu, Algebraic extensions of valued fields, {\em J. Algebra}, {\bf 108}, 
513-533, (1987).
\bibitem{P1} D.\ Popescu, Simple General Neron Desingularization in local $\bf Q$-algebras, {\em Communications in Algebra}, {\bf 47},  923 - 929, (2019),{\em arXiv:AC/1802.05109}.  
  \bibitem{P} D.\ Popescu,  N\'eron desingularization of extensions of valuation rings with an Appendix by K\k{e}stutis \v{C}esnavi\v{c}ius, in Proceedings of the conference 'Transient Transcendence in Transylvania’ (Trans’19: https://specfun.inria.fr/bostan/trans19/),  Eds. Alin Bostan, Kilian
Raschel,
to appear in a special volume of the Springer collection PROMS (Proceedings in Mathematics and Statistics, https://www.springer.com/series/10533),  {\em arxiv/AC:1910.09123v4}.

  
  
  
  
\bibitem{Sch} O.\ F.\ G.\ Schilling,  The theory of valuations, {\em Mathematical Surveys, American Math. Soc.}, (1950).

\bibitem{SP} A.\ J.\ de Jong et al., The Stacks Project. Available at \\ {\em http://stacks.math.columbia.edu}.

 \bibitem{S} R.\ Swan,  Neron-Popescu desingularization, in "Algebra and Geometry", Ed. M. Kang, {\em International Press}, Cambridge,  135-192,  (1998).

\bibitem{T} M.\ Temkin,  Tame distillation and desingularization by p-alterations, {\em Ann. of Math. }, {\bf 186},  97–126, (2017).

\bibitem{v}  L.\ van den Dries,  Lectures on the model theory of valued fields., in  Model
theory in algebra, analysis and arithmetic, {\em Lect. Notes in
Math.}, {\bf 2111}, Springer, Heidelberg,  55–157, (2014).

\bibitem{Z}  O.\ Zariski,  Local uniformization on algebraic varieties, {\em Ann. of Math.}, {\bf 41}, 852-896, (1940). 
  

\end{thebibliography}
\end{document}